\numberwithin{equation}{section}
\renewcommand\vec{\bm}
\newcommand{\n}[1]{\|{#1}\|}
\newcommand{\GL}{\text{GL}}
\newcommand{\PGL}{\text{PGL}}
\newtheorem{theorem}{Theorem}[section]
\newtheorem{lemma}[theorem]{Lemma}
\newtheorem{Proposition}[theorem]{Proposition}
\newtheorem{Conjecture}[theorem]{Conjecture}
\newtheorem{Corollary}[theorem]{Corollary}
\DeclarePairedDelimiter{\ceil}{\lceil}{\rceil}
\DeclarePairedDelimiter{\floor}{\lfloor}{\rfloor}
\title[A quadratic Vinogradov mean value theorem in finite fields]{A quadratic Vinogradov mean value theorem in finite fields}
\author{Sam Mansfield}
\address{Department of Mathematics, University of Bristol, Bristol, BS8 1UG, UK}
\email{sam.mansfield@bristol.ac.uk}
\author{Akshat Mudgal}
\address{Mathematical Institute, University of Oxford, Oxford OX2 6GG, UK}
\email{mudgal@maths.ox.ac.uk}
\subjclass[2010]{11B13, 11B30, 11L07} 
\keywords{Vinogradov's mean value theorem, Incidence estimates,  Modular hyperbolae}
\date{} 
\renewcommand\vec{\bm}
\begin{document}

\begin{abstract}
Let $p$ be a prime, let $s \geq 3$ be a natural number and let $A \subseteq \mathbb{F}_p$ be a non-empty set satisfying $|A| \ll p^{1/2}$. Denoting $J_s(A)$ to be the number of solutions to the system of equations
\[ \sum_{i=1}^{s} (x_i - x_{i+s}) =  \sum_{i=1}^{s} (x_i^2 - x_{i+s}^2) = 0, \]
with $x_1, \dots, x_{2s} \in A$, our main result implies that
\[  J_s(A) \ll |A|^{2s - 2 - 1/9}. \]
This can be seen as a finite field analogue of the quadratic Vinogradov mean value theorem. Our techniques involve a variety of combinatorial geometric estimates, including studying incidences between Cartesian products $A\times A$ and a special family of modular hyperbolae.

\end{abstract}
\maketitle

\section{Introduction}

Our aim in this paper is to obtain relatives of the well-known Vinogradov's mean value theorem over finite fields. In particular, given $s,k \in \mathbb{N}$ and some finite set $A \subseteq \mathbb{Z}$, an important problem in analytic number theory has been to count the number of solutions $J_{s,k}(A)$ to the system of equations
\begin{equation} \label{vino4}
    \sum_{i=1}^{s} (x_i^j - x_{i+s}^j) = 0 \ \ \ (1 \leq j \leq k), \end{equation}
where $x_1, \dots, x_{2s} \in A$. This has been analysed extensively in the setting when $A = [N]$, where we denote $[N] = \{1, 2, \dots, N\}$ for every $N \in \mathbb{N}$, in part due to its connections to Waring's problem and estimates on the zero-free region for the Riemann zeta function, see \cite{Wo2014, Wo2019} and the references therein. Here,  for every $s,k,N \in \mathbb{N}$ and $\epsilon >0$, one has 
\begin{equation} \label{vincon}  N^s + N^{2s - k(k+1)/2} \ll_{s,k} J_{s,k}([N]) \ll_{s,k, \epsilon} N^{\epsilon} (N^s + N^{2s - k(k+1)/2}) ,
\end{equation} 
with the lower bound arising from elementary considerations and the upper bound being the so-called main conjecture in Vinogradov’s mean value theorem. While the $k=1$ case of the above upper bound is trivial and the $k = 2$ case follows in a straightforward manner after applying estimates for the divisor function, even the $k =3$ case of this was wide open until very recent work of Wooley \cite{Wo2015}. The $k \geq 4$ case of this was then resolved in the famous work of Bourgain--Demeter--Guth \cite{BDG2016}, with an alternate number theoretic proof provided later by Wooley \cite{Wo2019}. Both these works further showed that for any set $A \subseteq [N]$, one has 
\begin{equation} \label{dsre}
    J_{s,k}(A) \ll_{s,k,\epsilon}  N^{\epsilon} (|A|^s + |A|^{2s - k(k+1)/2} )
\end{equation} 
for every $s,k \in \mathbb{N}$ and $\epsilon>0$, see also work of Guth--Maldague--Wang\cite{GMW2020} and Guo--Li--Yung \cite{GLY2021} for stronger bounds in the $k=2$ case. On the other hand, in this more general setting,  it has been conjectured that the $N^{\epsilon}$ factor above can be improved to $|A|^{\epsilon}$.

\begin{Conjecture} \label{bdcon}
 Let $A \subseteq \mathbb{Z}$ be a finite set, let $s,k$ be natural numbers and let $\epsilon >0$ be a real number. Then
 \[ J_{s,k}(A) \ll_{s,k, \epsilon} |A|^{\epsilon} ( |A|^s + |A|^{2s - k(k+1)/2} ) . \]
\end{Conjecture}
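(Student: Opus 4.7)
The plan is to reduce Conjecture~\ref{bdcon} to obtaining a version of the efficient congruencing or decoupling machinery that is genuinely additive-combinatorial, i.e., sensitive only to $|A|$ rather than to the diameter $N = \max A - \min A$. The starting point is the bound \eqref{dsre}, which already has the correct shape $|A|^s + |A|^{2s - k(k+1)/2}$; the only defect is the factor $N^{\epsilon}$. A first reduction shows that whenever $|A| \gg N^{c_{s,k}}$ for a suitable constant $c_{s,k} > 0$, the quantity $N^{\epsilon}$ can already be absorbed into $|A|^{\epsilon}$, so one may assume $A$ sits inside a very long interval while occupying only a small fraction of it. A standard dyadic pigeonhole then lets one also assume $A$ is reasonably equidistributed inside that interval, which will matter later.

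In this sparse regime I would attempt an induction on $s$ modelled on Wooley's nested efficient congruencing, but with every congruence step performed to a modulus of size comparable to a small power of $|A|$ rather than a power of $N$. This reformulation forces one to control incidences between the Cartesian power $A^k$ and appropriate level sets of the Vinogradov curve $\{(x, x^2, \dots, x^k) : x \in \mathbb{Z}\}$, and it is here that I would import the incidence-geometric techniques developed in the present paper, extended from quadratic hyperbolae to degree-$k$ varieties. For $k = 2$ the identity $x^2 - y^2 = (x - y)(x + y)$ allows one to reduce the count directly to a divisor sum and hence obtain the conjecture with only an $|A|^{\epsilon}$ loss unconditionally; this serves as the base case of the induction.

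The main obstacle is the inductive step from $k = 2$ to $k \geq 3$. Every known implementation of decoupling or efficient congruencing has, embedded in it, a pigeonhole or divisor estimate whose loss is proportional to the range $N$ and not to $|A|$. Replacing these steps with $|A|$-dependent losses seems to require a genuine structural dichotomy: either $A$ has much additive structure (forcing $A$ into a short generalised arithmetic progression, in which case the dense bound \eqref{dsre} applied to the progression finishes the job), or $A$ is \emph{generic} in a suitable sense and satisfies strong incidence bounds against higher-degree moment curves via polynomial-method techniques. Producing such a dichotomy, and in particular showing that the extremal configurations for $J_{s,k}(A)$ must essentially be arithmetic progressions, is the heart of the difficulty and, to my knowledge, has not been carried out for any $k \geq 3$; the finite-field arguments of the present paper should be viewed as a toy model where the analogous obstacle, namely the range being replaced by the characteristic $p$, can be bypassed by imposing $|A| \ll p^{1/2}$.
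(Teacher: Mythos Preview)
The statement you are attempting to prove is labelled in the paper as a \emph{conjecture}, not as a theorem or proposition, and the paper does not supply a proof; on the contrary, immediately after stating it the paper says that ``even the $k=2$ case of this remains open''. So there is no proof in the paper to compare against, and any genuine proof of Conjecture~\ref{bdcon} would be a major new result well beyond the scope of this work.

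Your proposal is candidly a research programme rather than a proof, and you more or less acknowledge this yourself in the final paragraph. But there is one concrete mathematical error worth flagging: you assert that for $k=2$ the identity $x^2-y^2=(x-y)(x+y)$ reduces the count to a divisor sum and hence yields the conjecture unconditionally with only an $|A|^{\epsilon}$ loss. That is precisely the step that fails. The divisor-function argument bounds the number of representations of $(m,n)$ as $(a_1+a_2+a_3,\,a_1^2+a_2^2+a_3^2)$ by $N^{\epsilon}$, where $N$ is the diameter of $A$; it does \emph{not} give $|A|^{\epsilon}$. Replacing $N^{\epsilon}$ by $|A|^{\epsilon}$ is exactly the content of the $k=2$ case of Conjecture~\ref{bdcon}, which the paper explicitly records as open, with the current best diameter-free bound being \eqref{gel}, namely $J_{3,2}(A)\ll |A|^{7/2}$ rather than $|A|^{3+\epsilon}$. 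So your proposed base case is not actually established, and the inductive scheme has no foundation even before one confronts the (correctly identified) obstacle at $k\geq 3$.
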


This was originally stated for the $k=2$ case by Bourgain--Demeter \cite{BD2015} with the more general case recorded in \cite{Mu2020, Wo2023}. Moreover, while the $k=1$ case of this is trivial, unlike \eqref{vincon}, even the $k=2$ case of this remains open. Note that in the setting when the set $A$ satisfies $|A| \gg N^{\delta}$ for some fixed $\delta >0$, Conjecture \ref{bdcon} is implied by \eqref{dsre}, with the implicit constant in the Vinogradov notation depending on $\delta$. 
On the other hand, when $A$ is much more sparse, say, $|A| \ll \log \log N$, then \eqref{dsre} perform worse than the trivial bound $J_{s,2}(A) \ll |A|^{2s-2}.$ In such regimes, the best known \emph{diameter-free} estimate for $J_{s,2}(A)$, that is, an upper bound that does not depend on $N$, arises from work of the second author in \cite{Mu2023a}. In particular, the latter implies that
\begin{equation} \label{gel}
J_{s,2}(A) \ll_{s} |A|^{2s - 3 + \delta_s}
\end{equation} 
for every $s \geq 3$ and $A \subseteq [N]$, where $\delta_3 = 1/2$ and $\delta_s = (1 - c)\cdot 2^{-s+ 2}$ for every $s \geq 4$, with $c = 1/1812$. Thus, one can obtain exponents arbitrarily close to conjectured exponent in Conjecture \ref{bdcon} when $k=2$ and $s$ is large.

In this paper, we consider the $k=2$ case of Conjecture \ref{bdcon} in the finite field setting, and so, given a prime number $p$ and some set $A \subseteq \mathbb{F}_p$, we denote $J_{s}(A)$ to count the number of solutions to \eqref{vino4} when $k=2$ and $x_1, \dots, x_{2s} \in A$. Here, for any $s \geq 3$ and for any $A \subseteq \mathbb{F}_p$ of the form $A = [N] (\mod p)$ with $N < p$, we have the elementary lower bound
\begin{equation} \label{lbdl}
 J_{s}(A) \gg_s |A|^{2s-1}/p + |A|^{2s-3} ,
 \end{equation}
and so, one can not obtain power savings analogous to Conjecture \ref{bdcon}  when $p^{1/2} \ll |A| \leq p$.  In fact, this regime can be analysed using standard methods from analytic number theory and additive combinatorics, and indeed, in \S2, we record the following estimate.

\begin{Proposition} \label{lset}
    Let $s \geq 3$ be an integer and let $A \subseteq \mathbb{F}_p$ be a non-empty set. Then 
    \[ J_s(A) \ll_s |A|^{2s-1}/p +  |A|^2p^{s-2} (\log |A|)^{2s} \]
\end{Proposition}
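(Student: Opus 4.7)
My plan is to apply discrete Fourier analysis and then split the resulting character sum into diagonal and off-diagonal parts. Define
$$S(\alpha, \beta) = \sum_{x \in A} e_p(\alpha x + \beta x^2), \quad (\alpha, \beta) \in \mathbb{F}_p^2,$$
where $e_p(t) = \exp(2\pi i t/p)$. By orthogonality of characters, one has $J_s(A) = p^{-2} \sum_{(\alpha, \beta) \in \mathbb{F}_p^2} |S(\alpha, \beta)|^{2s}$, which I will split at $\beta = 0$.

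The contribution from $\beta = 0$ reduces to $E_s^+(A)/p$, where $E_s^+(A)$ denotes the additive $s$-fold energy of $A$. Since the representation function $r_s^+\colon \mathbb{F}_p \to \mathbb{N}$ satisfies the pointwise bound $r_s^+(m) \leq |A|^{s-1}$, I obtain $E_s^+(A) \leq |A|^{2s-1}$, which yields the first term $|A|^{2s-1}/p$ of the claimed estimate.

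To handle the off-diagonal $\beta \neq 0$ part, I plan to exploit the fact that the map $(x,y) \mapsto (x+y, x^2+y^2)$ from $A \times A$ to $\mathbb{F}_p^2$ is at most $2$-to-$1$ (since knowing $u = x+y$ and $w = x^2+y^2$ determines the multiset $\{x,y\}$ via the quadratic $t^2 - ut + (u^2-w)/2 = 0$). Grouping the $2s$ variables into $s$ pairs, this reduces the off-diagonal contribution to bounding an $s$-fold additive energy of the image multi-set $B = \{(x+y, x^2+y^2) : (x,y) \in A^2\} \subseteq \mathbb{F}_p^2$, which has size $|B| \leq |A|^2$. For each first coordinate $u$, the fiber has size $|A \cap (u - A)|$ and its second coordinate lies on the parabola $w = 2x^2 - 2ux + u^2$ for $x \in A \cap (u-A)$. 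A dyadic decomposition over the levels of $r_A(u) = |A \cap (u-A)|$, combined with an incidence-type estimate counting collisions on the modular hyperbolae $\{(x,y) \in \mathbb{F}_p^2 : xy \equiv v \pmod{p}\}$, should produce the desired bound $|A|^2 p^{s-2}(\log |A|)^{2s}$ on the off-diagonal term. The $L^4$ identity $\sum_{\alpha,\beta} |S|^4 = p^2 J_2(A) \leq 2p^2|A|^2$ will serve as a base case, and the $(\log|A|)^{2s}$ factor will emerge from the iterated dyadic scales across the $2s$-fold structure.

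The main obstacle will be avoiding the substantially weaker bound $|A|^s p^{s-2}$ that results from combining the above $L^4$ identity with the naive pointwise estimate $|S(\alpha, \beta)| \leq (p|A|)^{1/2}$ (which follows from Weil's bound on the complete quadratic Gauss sum via Fourier completion). The required saving of a factor $|A|^{s-2}/(\log|A|)^{2s}$ cannot be extracted from any pointwise $L^\infty$ bound on $S$, since such a bound is not available in the relevant regime $|A| \leq p$. Instead, one must exploit the near-$2$-to-$1$ pair structure to convert a naive $|A|^s$ factor (one $|A|$ per variable) into the desired $|A|^2$ factor (one $|A|^2$ per pair), with the remaining $s-2$ pairs contributing at most $p^{s-2}$ degrees of freedom after the incidence estimates are applied.
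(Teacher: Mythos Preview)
Your handling of the $\beta=0$ contribution is correct and cleanly isolates the term $|A|^{2s-1}/p$. The genuine gap is in the off-diagonal part $p^{-2}\sum_{\alpha}\sum_{\beta\neq 0}|S(\alpha,\beta)|^{2s}$. You correctly diagnose that the pointwise bound $|S|\le(p|A|)^{1/2}$ combined with the $L^4$ identity gives only $p^{s-2}|A|^s$, and that a saving of $|A|^{s-2}$ is required; but the mechanisms you list do not supply it. Pairing the $2s$ variables and passing to the set $B=\{(x+y,x^2+y^2):x,y\in A\}$ is circular: up to bounded multiplicity the resulting additive energy of $B$ is $J_s(A)$ again, and the pairing is not even well-defined when $s$ is odd. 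The $2$-to-$1$ property of that map is precisely what yields $J_2(A)\ll|A|^2$, which you have already spent in the $L^4$ step; it gives no further saving when reused. The modular hyperbolae you invoke play no role in this proposition---in the paper they appear only in the proof of the much harder Theorem~\ref{3c} for the sparse regime $|A|\ll p^{1/2}$. In short, you have described the target but not a route to it.

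The paper's argument takes a different route. It proves the recursion
\[
J_s(A)\ll_s |A|^{2s-1}/p + p\,J_{s-1}(A)\,(\log|A|)^2
\]
and iterates it down to $J_2(A)\ll|A|^2$. The recursion is obtained by writing $J_s(A)$ as a weighted incidence count between points $\vec{u}$ (with weight $r_s(\vec{u})$) and translated parabolas $l_{\vec{x}}=\{(t,(t-x_1)^2+x_2):t\in\mathbb{F}_p\}$ (with weight $r_{s-1}(\vec{x})$), using that $\vec{u}=\vec{x}+(a,a^2)$ forces $\vec{u}\in l_{\vec{x}}$. The underlying unweighted incidence bound
\[
\sum_{\vec{p}\in P}\sum_{\vec{x}\in X}\mathds{1}_{\vec{p}\in l_{\vec{x}}}\ll \frac{|P||X|}{p}+p^{1/2}|P|^{1/2}|X|^{1/2}
\]
is a second-moment computation (two distinct points lie on exactly one such parabola, and each point lies on exactly $p$ of them), and dyadic decomposition over the level sets of $r_s$ and $r_{s-1}$ upgrades this to the weighted form at the cost of $(\log|A|)^2$ per inductive step. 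This recursive incidence mechanism is the missing idea in your proposal.
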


This matches the lower bound in \eqref{lbdl}, up to multiplicative constants, when we have $|A| \gg p^{ 1/2 +  1/(4s-6)} (\log p)^2$ and $s \geq 3$. Thus, we focus on the sparse set case, that is, when $|A| \ll p^{1/2}$; the latter often being inaccessible to the aforementioned methods. Our main result in this paper dispenses non-trivial bounds in this regime for arbitrary sets $A \subseteq \mathbb{F}_p$.

\begin{theorem} \label{3c}
    Let $s \geq 3$ and let $A \subseteq \mathbb{F}_p$ satisfy $|A| \ll p^{1/2}$. Then we have
    \[ J_{s}(A) \ll |A|^{2s - 2 - 1/9} .\]
\end{theorem}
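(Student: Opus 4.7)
My plan is to reduce to the base case $s=3$ by iterating Cauchy--Schwarz, and to handle $J_3(A)$ by converting it into an incidence problem between $A\times A$ and a two-parameter family of (translates of) modular hyperbolae.

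Writing $J_s(A) = \sum_{\mathbf{v}\in\mathbb{F}_p^2} r_s(\mathbf{v})^2$ with $r_s(\mathbf{v}) = \#\{(x_1,\dots,x_s)\in A^s : \sum_i(x_i,x_i^2) = \mathbf{v}\}$, the obvious recursion $r_s(\mathbf{v}) = \sum_{a\in A} r_{s-1}(\mathbf{v}-(a,a^2))$ combined with Cauchy--Schwarz yields the bound $J_s(A)\le |A|^2\,J_{s-1}(A)$. Iterating this $s-3$ times reduces Theorem \ref{3c} to the single inequality
\[
J_3(A) \ll |A|^{4 - 1/9}.
\]

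For the base case I would write $J_3(A) = \sum_{T,E\in\mathbb{F}_p} M(T,E)^2$, where $M(T,E)$ counts the ordered triples $(x_1,x_2,x_3)\in A^3$ with $x_1+x_2+x_3=T$ and $x_1x_2+x_1x_3+x_2x_3=E$ (equivalently, with prescribed sum and sum of squares). Eliminating $x_3=T-x_1-x_2$ places each such triple in bijection with an ordered pair $(x_1,x_2)\in A\times A$ satisfying $T-x_1-x_2\in A$ and lying on the conic
\[
C_{T,E}\colon\ x_1^2 + x_1 x_2 + x_2^2 - T(x_1+x_2) + E = 0.
\]
Translating by $(T/3,T/3)$ reduces this to $u^2+uv+v^2 = T^2/3-E$, and the factorisation $u^2+uv+v^2 = (u-\omega v)(u-\omega^2 v)$---with $\omega$ a primitive cube root of unity living in $\mathbb{F}_p$ when $p\equiv 1\pmod 3$ and in $\mathbb{F}_{p^2}$ otherwise---exhibits $C_{T,E}$ as a translate of a modular hyperbola. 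Thus $\{C_{T,E}\}_{(T,E)\in\mathbb{F}_p^2}$ is a two-parameter family of such curves inside $\mathbb{F}_p^2$.

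The next step is a dyadic decomposition of $J_3(A) = \sum_{T,E} M(T,E)^2$ according to the value $M(T,E)\sim K$. Since $\sum_{T,E} M(T,E)=|A|^3$, Markov's inequality gives $\#\{(T,E) : M(T,E)\ge K\}\ll |A|^3/K$, which is sharp in the small-$K$ regime. For large $K$ I would apply an incidence estimate of Stevens--de Zeeuw type between the Cartesian product $A\times A$ and the family $\{C_{T,E}\}$: since $|A|\ll p^{1/2}$, we sit firmly in the non-trivial regime where such finite-field incidence bounds are strongest, and one should obtain an estimate of the shape $\#\{(T,E):M(T,E)\ge K\}\ll |A|^{2\alpha}/K^{\beta}$ with $\beta>1$. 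Summing $K^2$ times these two dyadic bounds and optimising should deliver the advertised saving of $|A|^{1/9}$ over the trivial exponent.

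The principal obstacle I foresee is the incidence step itself: the family $\{C_{T,E}\}$ is genuinely two-parameter, so it is neither a pencil of lines nor a family of hyperbolae sharing fixed asymptotes, and classical point/line incidence theorems do not apply directly. One will either need to re-parameterise so as to invoke a sharp Stevens--de Zeeuw or Murphy--Rudnev--Shkredov bound for honest modular hyperbolae $xy=c$ against $A\times A$, or prove a bespoke incidence estimate, using that any two distinct members of $\{C_{T,E}\}$ intersect in $O(1)$ points. Being careful to retain the arithmetic constraint $T-x_1-x_2\in A$---rather than crudely bounding $M(T,E)$ by the raw incidence count $\lvert(A\times A)\cap C_{T,E}\rvert$---will also be essential to make the exponent $1/9$ emerge rather than something weaker.
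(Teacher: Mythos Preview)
Your reduction to $s=3$ is fine and matches the paper's. The real gap is in the parametrisation you choose for $J_3(A)$. Eliminating $x_3 = T - x_1 - x_2$ produces the conic $x_1^2 + x_1x_2 + x_2^2 - T(x_1+x_2) + E = 0$, which is \emph{not} the graph of a M\"obius transformation $x_2 = g(x_1)$: solving for $x_2$ gives a genuine quadratic with two branches. Your factorisation $(u-\omega v)(u-\omega^2 v)=c$ does not rescue this, because the linear change of variables sending it to $st=c$ destroys the Cartesian-product structure of $A\times A$, and that structure is exactly what the strong finite-field hyperbola bounds (Bourgain, Shkredov, Rudnev--Wheeler) require. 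The Stevens--de Zeeuw theorem you propose to invoke is for \emph{lines} and does not apply to a two-parameter family of conics; no off-the-shelf $\mathbb{F}_p$ incidence bound for such a family is known to be strong enough to yield the saving $|A|^{1/9}$.

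The paper's key move is to use instead the asymmetric representation $J_3(A)=\sum_{m,n} r(m,n)^2$ with $r(m,n)=\#\{(a_1,a_2,a_3)\in A^3: a_1+a_2-a_3=m,\ a_1^2+a_2^2-a_3^2=n\}$. Eliminating $a_3$ gives the identity $2(a_1-m)(a_2-m)=m^2-n$, an \emph{axes-parallel} hyperbola and hence the graph $a_2=g_{m,n}(a_1)$ of a genuine M\"obius map. This is what brings the Rudnev--Wheeler bound ($\ll |A|^7 k^{-5}$ $k$-rich M\"obius maps against $A$) into play; the paper couples it with a bespoke multiplicative-energy estimate $E(H)\ll \tau |H|^2$ for the family $\{g_{m,n}\}$ (proved by showing the parameters $(m,n)$ concentrate on few vertical lines or down-parabolae) and a Cauchy--Schwarz step to obtain $\sum_{(m,n)\in W}|h_{m,n}\cap(A\times A)|\ll |A|^{14/11}|W|^{9/11}$, from which $1/9$ falls out. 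Incidentally, your worry about retaining the constraint $T-x_1-x_2\in A$ is unfounded: the paper \emph{does} crudely bound $r(m,n)\le |h_{m,n}\cap(A\times A)|$, discarding the third-variable constraint, and still reaches the stated exponent.
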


The trivial bound here is $J_s(A) \ll |A|^{2s - 2}$ and we improve upon this by a factor of $|A|^{1/9}$. As for lower bounds, noting $\eqref{lbdl}$, one sees that there are large subsets $A \subseteq \mathbb{F}_p$ satisfying $|A| \gg_s p^{1/2}$ such that $J_s(A) \gg_s |A|^{2s - 3}$ for every $s \geq 3$. It would be desirable to show that 
\[ J_{s}(A) \ll_{s,\epsilon}  |A|^{2s-3 + \epsilon} \]
for every $\epsilon >0$ and for all sufficiently small sets $A \subseteq \mathbb{F}_p$, and in fact, such an estimate would deliver the $k=2$ case of Conjecture \ref{bdcon} in a straightforward manner. It would be interesting to even obtain bounds similar in strength to \eqref{gel} in the finite field setting for sparse sets.

We remark that for larger values of $s$, we can prove slightly better exponents than those presented in Theorem \ref{3c}, at the cost of restricting to sparser sets $A \subseteq \mathbb{F}_p$, and this is precisely the content of our next result.

\begin{theorem} \label{4c}
Let $s \geq 4$ and let $A \subseteq \mathbb{F}_p$ satisfy $|A| \ll_{s} p^{\frac{15}{13(s-1)}}$. Then we have that
    \[ J_s(A) \ll_s |A|^{2s - 2 - 1/7 + \eta_s}, \]
    where $\eta_s = (4/11)^{s-3} \cdot (2/63)$.
\end{theorem}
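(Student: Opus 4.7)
The plan is to proceed by induction on $s$, with the base case $s = 3$ supplied by Theorem \ref{3c}. One verifies that $\eta_3 = 2/63$ gives $-1/7 + \eta_3 = -9/63 + 2/63 = -1/9$, so the statement of Theorem \ref{4c} at $s = 3$ is precisely $J_3(A) \ll |A|^{4 - 1/9}$. The proof therefore reduces to the inductive step: for $s \geq 4$, assuming the claimed bound for $J_{s-1}(A)$ (under the relevant sparsity condition), deduce the corresponding bound for $J_s(A)$ with $\eta_s = (4/11)\eta_{s-1}$.

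The central task is to establish a recursive inequality of the form
\[
J_s(A)^{11} \ll_s J_{s-1}(A)^{4} \cdot |A|^{14s - 7} \qquad (s \geq 4).
\]
A short bookkeeping check confirms that feeding $J_{s-1}(A) \ll |A|^{e_{s-1}}$ with $e_{s-1} = 2(s-1) - 2 - 1/7 + \eta_{s-1}$ into this inequality produces $J_s(A) \ll |A|^{e_s}$ for the desired exponent $e_s = 2s - 2 - 1/7 + \eta_s$, using the algebraic identity $11 e_s - 4 e_{s-1} = 14s - 7$, which holds exactly when $\eta_s = (4/11)\eta_{s-1}$.

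To derive the recursion, I would re-use the incidence-theoretic toolkit underlying Theorem \ref{3c}. Writing $J_s(A) = \sum_{t \in \mathbb{F}_p^2} r_s(t)^2$ for the representation function of the parabolic lift $P(A) = \{(a, a^2) : a \in A\}$, and expanding $r_s(t) = \sum_{x \in A} r_{s-1}(t - (x, x^2))$, one performs a Cauchy--Schwarz combined with a dyadic decomposition over the level sets $\{t \in \mathbb{F}_p^2 : r_{s-1}(t) \sim \Delta\}$. The resulting sum reduces to an incidence count between a point set (drawn from a popular level set, whose cardinality is bounded via the inductive hypothesis on $J_{s-1}(A)$) and a family of affine curves (lines or modular hyperbolae) parametrised by $A$. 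Applying the Stevens--de Zeeuw point--line incidence estimate $I(P, L) \ll (|P||L|)^{11/15} + |P| + |L|$ and optimising over the dyadic parameter then produces the critical exponent ratio $4/11$ between $J_s$ and $J_{s-1}$.

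The sparsity condition $|A| \ll_s p^{15/(13(s-1))}$, which is equivalent to $|A|^{13(s-1)} \ll p^{15}$, is dictated by the non-degenerate range of Stevens--de Zeeuw at the $s$-th inductive stage, where the point configurations involved have size on the order of $|A|^{s-1}$. The main obstacle is engineering the dyadic decomposition so that the dominant dyadic level feeds into Stevens--de Zeeuw with no loss, yielding the recursion $J_s(A)^{11} \ll J_{s-1}(A)^4 |A|^{14s-7}$ without any weakening of exponents; in particular, both the high-multiplicity and low-multiplicity tails of $r_{s-1}$ must be controlled by the inductive bound together with the trivial bound $r_{s-1}(t) \leq |A|^{s-2}$. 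Secondary concerns, such as logarithmic losses from pigeonholing and the persistence of the sparsity hypothesis through the induction, can be absorbed into the implicit constants since $|A|$ is polynomially bounded in $p$.
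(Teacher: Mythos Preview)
Your proposal is essentially the paper's approach: the recursion $J_s(A)^{11} \ll_s J_{s-1}(A)^4\,|A|^{14s-7}$ is precisely Lemma~\ref{inc1}, and the induction seeded by Theorem~\ref{3c} is exactly how the paper proceeds. Two points deserve tightening. First, in the paper's incidence setup the curves are parabola translates $l_{\vec v}$ indexed by $\vec v\in(s-1)\mathscr{A}$ with weight $r_{s-1}$, while the points lie in $s\mathscr{A}$ with weight $r_s$; both $\ell^2$ norms $\n{r_s}_2,\n{r_{s-1}}_2$ must enter to produce the $4/11$ exponent, so your description of ``curves parametrised by $A$'' with only $r_{s-1}$ dyadically decomposed is not quite the right geometry, and modular hyperbolae play no role in this step. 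Second, the paper sidesteps your dyadic pigeonholing---and the logarithmic losses it brings, which are \emph{not} absorbed into $\ll_s$ as you claim---by invoking the weighted Stevens--de~Zeeuw bound (Lemmata~\ref{wt} and~\ref{sztr}) directly, obtaining the clean inequality $J_s \ll J_s^{4/15} J_{s-1}^{4/15} |A|^{(14s-7)/15} + |A|^{2s-3}$ with no loss.
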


We prove these results by reducing the problem of counting $J_s(A)$ to estimating incidences between various point sets and translates of curves in $\mathbb{F}_p^2$. For instance, one of the key ideas in the proof of Theorem \ref{3c} considers counting incidences between a special family of modular hyperbolae and point sets of the form $A \times A$ in $\mathbb{F}_p^2$, see Lemma \ref{hypinc}. Similarly, we use point-line incidences in $\mathbb{F}_p^2$ to obtain slightly better exponents when $s \geq 4$ in Theorem \ref{4c}. This accounts for another reason as to why there is a gap in the exponents available in \eqref{gel} and Theorems \ref{3c} and \ref{4c}, since incidence results over finite fields are few and often non-optimal, while, for instance, sharp point-line incidence results have been available in the Euclidean setting since the classical work of Szemer\'{e}di--Trotter \cite{ST1983}.

We further mention that our techniques can deliver results of a sum-product flavour. This would not be a surprise to experts in the area since our methods rely crucially on incidence geometric ideas, the latter being a standard set of tools used for studying the sum-product problem since the key work of Elekes \cite{El1997}. In order to elucidate upon this further, we recall the sum-product conjecture raised by Erd\H{o}s-Szemer\'{e}di \cite{ES1983}, which states that given any $s \in \mathbb{N}$, any $\epsilon >0$ and any finite set $A \subseteq \mathbb{Z}$, one has
\[ |sA| + |A^{(s)}| \gg_{s, \epsilon} |A|^{s-\epsilon},  \]
where $sA = \{a_1 + \dots + a_s : a_1, \dots, a_s \in A\}$ and $A^{(s)} = \{ a_1\dots a_s : a_1, \dots, a_s \in A\}.$
While a significant body of work has been done towards this problem, this conjecture remains wide open, with the best known bounds towards this recorded in the work of Rudnev--Stevens \cite{RS2022} for the $s=2$ case, and in a paper by P\'{a}lv\"{o}lgyi--Zhelezov \cite{PZ2020} for the case when $s$ is large. In particular, building upon the breakthrough work of Bourgain--Chang \cite{BC2004}, the latter paper shows that for any $s \in \mathbb{N}$ and for any finite $A \subseteq \mathbb{Z}$, one has
\[ |sA| + |A^{(s)}| \gg_{s} |A|^{c \log s/ \log \log s},  \]
for some absolute constant $c>0$. A natural ``energy" analogue of this problem would be to study, for any $s \in \mathbb{N}$ and any finite $A \subseteq \mathbb{N}$, the number of solutions $T_{s}(A)$ to the system of equations
\[ a_1 \dots a_s = a_{s+1} \dots a_{2s} \ \  \text{and} \ \  a_1 + \dots + a_s = a_{s+1} + \dots + a_{2s}, \]
with $a_1, \dots, a_s \in A$. Here, by noting the sum-product philosophy, one may expect that
\[  T_s(A) \ll_{s, \epsilon} |A|^{s+ \epsilon}  \]
for any $s \in \mathbb{N}$ and $\epsilon >0$. Moreover, upon applying the Cauchy-Schwarz inequality, one sees that the above conjectured estimate would imply major progress towards the sum-product conjecture by dispensing the estimate
\[ |sA| |A^{(s)}| \gg_{s,\epsilon} |A|^{s- \epsilon}, \]
for every $s \in \mathbb{N}$, every $\epsilon >0$ and every finite $A \subseteq \mathbb{N}$. The best known upper bound for $T_{s}(A)$, when $A\subseteq \mathbb{N}$, can be deduced from recent work of the second author \cite{Mu2023b} which implies that
\[ T_{s}(A) \ll_s |A|^{2s - c \log s/ \log \log s} , \]
for some absolute constant $c>0$. 

Returning to the finite field setting, given a non-empty set $A \subseteq \mathbb{F}_p$, we denote $T(A)$ to be the number of solutions to the system
\begin{equation} \label{fv1}
    a_1 + a_2 + a_3 = a_4 + a_5 + a_6 \ \ \text{and} \ \ a_1 a_2 a_3 = a_4 a_5 a_6 
    \end{equation} 
with $a_1, \dots, a_6 \in A$. With this in hand, we state another application of our methods.

\begin{theorem} \label{smp}
    Let $A \subseteq \mathbb{F}_p$ be a finite set with $|A| \ll p^{1/3}$. Then
    \[ T(A) \ll |A|^{4 - 1/11} .\]
\end{theorem}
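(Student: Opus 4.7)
The plan is to mirror the strategy of Theorem \ref{3c}, with the role of the additive–quadratic curves replaced by modular hyperbolae arising from the multiplicative constraint $a_1a_2a_3=a_4a_5a_6$. First I would expand $T(A)$ as a second moment, writing
\[
T(A) \;=\; \sum_{(s,\pi) \in \mathbb{F}_p^2} r(s,\pi)^2, \qquad r(s,\pi) \;:=\; |\{(a,b,c)\in A^3 : a+b+c = s,\ abc = \pi\}|.
\]
For each $a \in A \setminus \{0\}$, any admissible pair $(b,c)$ must consist of the (at most two) roots in $A$ of $t^2 - (s-a)t + \pi/a$, giving $r(s,\pi) \leq 2N(s,\pi)$, where
\[
N(s,\pi) \;:=\; |\{a \in A : (s-a,\pi/a) \in Q\}|, \qquad Q \;:=\; \{(b+c,bc) : (b,c)\in A^2\} \subseteq \mathbb{F}_p^2,
\]
so that $T(A) \leq 4 \sum_{(s,\pi)} N(s,\pi)^2$. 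The crucial geometric point is that, for each fixed $(s,\pi)$, the $|A|$ candidate points $(s-a,\pi/a)$ all lie on the modular hyperbola $H_{s,\pi} = \{(u,v)\in \mathbb{F}_p^2 : (s-u)v = \pi\}$, so that $N(s,\pi)$ measures incidences between $H_{s,\pi}$ and the near-Cartesian set $Q \subseteq (A+A)\times AA$.

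To bound $\sum_{(s,\pi)} N(s,\pi)^2$, one reindexes the second moment by $(u,v) = (s-a_1,\pi/a_1)$ to obtain
\[
\sum_{(s,\pi)} N(s,\pi)^2 \;=\; \sum_{(a_1,a_2)\in A^2} \bigl|\{P\in Q : \Phi_{a_1,a_2}(P) \in Q\}\bigr|,
\]
where $\Phi_{a_1,a_2}(u,v) = (u + a_1 - a_2,\ v\cdot a_1/a_2)$ is the shift-and-dilate associated with $(a_1,a_2)$. A Cauchy--Schwarz step that groups the pairs $(a_1,a_2) \in A^2$ by the parameter $(\alpha,\beta) := (a_1-a_2,\ a_1/a_2) \in \mathbb{F}_p \times \mathbb{F}_p^*$ — each generic value of which is attained at most once — combined with the hyperbola-versus-Cartesian-product incidence estimate (the analogue of Lemma \ref{hypinc} underlying Theorem \ref{3c}) applied to the family $\{H_{s,\pi}\}$ and the set $Q$, should then deliver a non-trivial upper bound on this mixed additive-multiplicative energy of $Q$.

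Combining this with a dyadic decomposition of $(s,\pi)$ according to the magnitude of $N(s,\pi)$, and balancing against the first-moment estimate $\sum_{(s,\pi)} N(s,\pi) = |A|\cdot|Q| \leq |A|^3/2$, should assemble the target bound $T(A) \ll |A|^{4-1/11}$. The hypothesis $|A| \ll p^{1/3}$ enters to keep $|A+A|$ and $|AA|$ of size $\ll p^{2/3}$, placing the configuration in the regime where the underlying point-curve incidence bound is non-trivial and where $\mathbb{F}_p$-wraparound effects do not intrude. The main obstacle is managing the simultaneous additive and multiplicative action of $\Phi_{a_1,a_2}$ in the incidence step: it is precisely this mixed coupling, not present in the purely additive system of Theorem \ref{3c}, that degrades the final saving from $1/9$ to $1/11$.
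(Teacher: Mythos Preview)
Your proposal has a genuine gap. The reindexing you perform is correct and recovers $\sum_{(s,\pi)} N(s,\pi)^2 = \sum_{(a_1,a_2)\in A^2} |\{P\in Q:\Phi_{a_1,a_2}(P)\in Q\}|$, but since $N(s,\pi)\asymp r(s,\pi)$ this quantity is just $T(A)$ again up to constants; all of the work has to happen in the subsequent ``Cauchy--Schwarz plus incidence'' step, and that step is not carried out. You invoke an ``analogue of Lemma~\ref{hypinc}'', but Lemma~\ref{hypinc} is proved specifically for the family $h_{m,n}$ against a genuine Cartesian product $A\times A$, via Lemma~\ref{rich} on M\"obius transformations. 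Your point set $Q=\{(b+c,bc)\}$ is not a Cartesian product, and your transformations $\Phi_{a_1,a_2}(u,v)=(u+a_1-a_2,\,v\,a_1/a_2)$ are affine rather than M\"obius, so neither Lemma~\ref{hypinc} nor Lemma~\ref{rich} applies. No substitute incidence bound is identified, and there is no computation showing that whatever bound you have in mind produces the exponent $4-1/11$.

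The paper's route is in fact simpler and avoids hyperbolae entirely. Fixing $a_1,a_2$ and eliminating $a_3$ from $a_1+a_2+a_3=m$, $a_1a_2a_3=n$ gives $(a_1+a_2)+n(a_1a_2)^{-1}=m$, which is \emph{linear} in $(m,n)$. Hence the weighted point set $\{(m,n)\}$ with weights $s(m,n)$ meets the family of lines $l_{u,v}=\{x=yv+u\}$ indexed by $(u,v)=(a_1+a_2,(a_1a_2)^{-1})\in S_1$, with $|S_1|\le |A|^2$. One then applies the weighted Stevens--de~Zeeuw bound (Lemma~\ref{wt} combined with \eqref{sz2}), obtaining $T(A)\ll T(A)^{4/15}|A|^{43/15}+|A|^3$, which rearranges to $T(A)\ll |A|^{43/11}=|A|^{4-1/11}$. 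The exponent $1/11$ thus comes directly from the $11/15$ in Stevens--de~Zeeuw, not from any degradation of the $1/9$ hyperbola argument; and the hypothesis $|A|\ll p^{1/3}$ is used only to ensure $|S_1|^{13}\ll p^{15}$ so that \eqref{sz2} is available.
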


It is worth remarking that we prove the above result more to show that non-trivial bounds may be found for $T_s(A)$ in finite fields as opposed to the motivation of obtaining sharper sum-product estimates in finite fields. Indeed, the bounds that one may obtain for $|3A| + |A^{(3)}|$ by applying the Cauchy-Schwarz inequality in conjunction with Theorem \ref{smp} are significantly worse than the state of the art sum-product estimates in finite fields recorded in the work of Mohammadi--Stevens \cite{MS2023}. In particular, the main result in \cite{MS2023} implies that for any $A \subseteq \mathbb{F}_p$ with $|A| \ll p^{1/2}$ and for any $\epsilon >0$, one has
\[ |2A| + |A^{(2)}| \gg_{\epsilon} |A|^{5/4 - \epsilon}. \]

We briefly mention some applications of Theorem \ref{3c}. For instance, Theorem \ref{3c} delivers a weaker version of a result of Shkredov--Shparlinski \cite[Lemma 2.10]{SS2018} in a straightforward manner.

\begin{Corollary} \label{cor1}
Let $A \subseteq \mathbb{F}_p$ satisfy $|A| \ll p^{1/2}$ and $|2A| \leq K|A|$ and let $S = \{a^2 : a \in A\}$. Then
\[ |3S|  \gg  |A|^{1 + 1/9} K^{-6} .\]
\end{Corollary}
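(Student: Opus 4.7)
The plan is to reduce to the $s=3$ case of Theorem \ref{3c} via two applications of the Cauchy--Schwarz inequality together with the Pl\"unnecke--Ruzsa inequality. First, define the representation function
\[ r(m) = |\{(a_1, a_2, a_3) \in A^3 : a_1^2 + a_2^2 + a_3^2 = m\}|, \]
which is supported on $3S$ and satisfies $\sum_m r(m) = |A|^3$. A standard Cauchy--Schwarz gives $|A|^6 \leq |3S| \cdot E$, where $E = \sum_m r(m)^2$ is the additive energy of $3$-fold sums of elements of $S$. It therefore suffices to establish an upper bound of the shape $E \ll K^3 |A|^{5 - 1/9}$, which is already stronger than what is needed since $K \geq 1$.

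To estimate $E$, I would refine $r$ by fibering over the \emph{linear} sum. Setting
\[ q(s, m) = |\{(a_1, a_2, a_3) \in A^3 : a_1 + a_2 + a_3 = s,\ a_1^2 + a_2^2 + a_3^2 = m\}|, \]
we have $r(m) = \sum_{s \in 3A} q(s, m)$. A second Cauchy--Schwarz yields $r(m)^2 \leq |3A| \sum_s q(s, m)^2$, and summing over $m$ produces
\[ E \leq |3A| \sum_{s, m} q(s, m)^2 = |3A| \cdot J_3(A), \]
where the final identity holds because $\sum_{s, m} q(s, m)^2$ enumerates precisely the pairs of triples in $A^3 \times A^3$ that agree both in their sum and in their sum of squares, which is exactly the quantity defining $J_3(A)$.

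To finish, I would invoke the Pl\"unnecke--Ruzsa inequality with the hypothesis $|A+A| \leq K|A|$ to conclude $|3A| \leq K^3|A|$, and apply Theorem \ref{3c} with $s = 3$ (whose hypothesis $|A| \ll p^{1/2}$ is in force) to bound $J_3(A) \ll |A|^{4 - 1/9}$. Substituting gives $E \ll K^3 |A|^{5 - 1/9}$, and hence
\[ |3S| \geq |A|^6/E \gg |A|^{1 + 1/9}/K^3 \geq |A|^{1+1/9} K^{-6}, \]
as required. There is no real obstacle to this plan, since all the genuinely difficult incidence-geometric content is packaged into Theorem \ref{3c}; the only subtlety is choosing the second Cauchy--Schwarz so that the resulting double sum reassembles into $J_3(A)$ rather than a less useful energy. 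The exponent $6$ on $K$ is not sharp in this argument — one in fact gets $K^{-3}$ — but the statement of the corollary only requires the weaker form.
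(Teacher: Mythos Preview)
Your proof is correct and, in fact, slightly sharper than the paper's. Both arguments reduce to bounding the same energy $E = K_3(A)$ (the number of pairs of triples from $A^3$ with equal sum of squares) in terms of $J_3(A)$, then finish via Cauchy--Schwarz and Theorem~\ref{3c}. The difference lies in how that reduction is carried out. The paper uses Fourier analysis: writing $K_3(A)$ via orthogonality, fibering over the value $n$ of the linear sum $a_1+\dots-a_6$, and applying the triangle inequality to obtain $K_3(A) \leq |3A-3A|\,J_3(A)$, whence Lemma~\ref{prin} gives the factor $K^6$. You instead fibre directly over the linear sum $s = a_1+a_2+a_3$ and apply Cauchy--Schwarz in that variable, which yields the tighter inequality $E \leq |3A|\,J_3(A)$ and hence the improved factor $K^{-3}$. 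Your route is more elementary (no exponential sums) and loses less in $K$; the paper's Fourier argument is perhaps more suggestive of the connection to Corollary~\ref{cor2}, but for this statement your approach is cleaner. One minor remark: the form of Pl\"unnecke--Ruzsa you invoke ($|3A|\leq K^3|A|$) is the standard asymmetric version rather than the symmetric $|kA-kA|$ form recorded in Lemma~\ref{prin}, but this is of course well known.
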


This can be interpreted as a finite field analogue of results on sumsets of convex subsets of real numbers. For instance, we refer the reader to work of Elekes--Nathanson--Ruzsa \cite{ENR2000}, who showed that for any convex function $f: \mathbb{R} \to \mathbb{R}$ and for any finite set $A \subseteq \mathbb{R}$ satisfying $|2A| \leq K|A|$, one has 
\[ |2f(A)| \geq |A|^{3/2} K^{-1},\]
see also \cite{SW2022} and the references therein for the best known estimates in this direction.

It is a  well known phenomenon in analytic number theory and harmonic analysis that one can employ estimates on energies akin to $J_s(A)$ to furnish bounds on moments of weighted exponential sums, see, for instance, \cite{BD2015, CG2007, GGP2022, Mu2022}. Thus, we define, for every $n \in \mathbb{N}$ and every $q \geq 1 $ and every function $g : \mathbb{F}_p^n \to \mathbb{R}$, the $L^q$ norm $\n{g}_{L^q}$ and $l^q$ norm $\n{g}_q$ of $g$ to be
\begin{equation} \label{hr123}   
\n{g}_{L^q} = \big( p^{-n} \sum_{\vec{x} \in \mathbb{F}_p^n} |g(\vec{x})|^q   \big)^{1/q} \ \ \text{and}  \ \ \n{g}_q =  \big(  \sum_{\vec{x} \in \mathbb{F}_p^n} |g(\vec{x})|^q   \big)^{1/q} 
\end{equation} 
Moreover, given a function $\mathfrak{a} : \mathbb{F}_p \to \mathbb{R}$, we define the exponential sum $F_{\mathfrak{a}} : \mathbb{F}_p^2 \to \mathbb{C}$ as 
\begin{equation} \label{ep3}
F_{\mathfrak{a}}(x,y) = \sum_{n \in \mathbb{F}_p} \mathfrak{a}(n) e( (x n + y n^2)/p), 
\end{equation} 
where $e(\theta) = e^{2 \pi i \theta}$ for every $\theta \in \mathbb{R}$. With this in hand, we state the second corollary of Theorem \ref{3c}.

\begin{Corollary} \label{cor2}
Let $A \subseteq \mathbb{F}_p$ satisfy $|A| \ll p^{1/2}$ and let $\mathfrak{a} : \mathbb{F}_p \to \mathbb{R}$ be a function supported on the set $A$. Then
\[ \n{F_{\mathfrak{a}}}_{L^6} \ll (\log |A| + 1)^{1/2}|A|^{4/27} \n{\mathfrak{a}}_{2}.  \]
\end{Corollary}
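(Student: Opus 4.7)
The plan is to deduce the corollary from Theorem \ref{3c} (with $s=3$) via a routine dyadic decomposition of the amplitude, with essentially all the content concentrated in the application of that theorem.

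First, I would expand $|F_{\mathfrak{a}}(x,y)|^6 = F_{\mathfrak{a}}(x,y)^3 \overline{F_{\mathfrak{a}}(x,y)^3}$, sum over $(x,y) \in \mathbb{F}_p^2$, and invoke orthogonality of the additive characters $e(\cdot/p)$ to obtain the weighted energy identity
\[ \|F_{\mathfrak{a}}\|_{L^6}^6 = \sum_{(n_1,\ldots,n_6) \in \mathcal{S}} \prod_{j=1}^{6} \mathfrak{a}(n_j), \]
where $\mathcal{S} \subseteq A^6$ is the solution set of $\sum_{j=1}^{3}(n_j - n_{j+3}) = \sum_{j=1}^{3}(n_j^2 - n_{j+3}^2) = 0$. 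Replacing $\mathfrak{a}$ by $|\mathfrak{a}|$ only increases the right-hand side, so I may reduce to the case $\mathfrak{a} \geq 0$ without changing $\|\mathfrak{a}\|_2$ or the support.

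Next, I would set $\Delta = \|\mathfrak{a}\|_{\infty}$ and decompose dyadically via the level sets $A_i = \{n : 2^{-i-1}\Delta < \mathfrak{a}(n) \leq 2^{-i}\Delta\}$ together with the pieces $\mathfrak{a}_i = \mathfrak{a} \cdot \mathbf{1}_{A_i}$ for $0 \leq i \leq L := \lceil \log_2 |A| \rceil$. The truncated tail is bounded pointwise by $2^{-L-1}\Delta$ and hence contributes at most $|A|\cdot 2^{-L-1}\Delta$ to $\|F_{\mathfrak{a}}\|_{L^6}$, which is easily absorbed into $|A|^{4/27}\|\mathfrak{a}\|_2 \geq |A|^{4/27}\Delta$. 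For each retained piece, the pointwise bound $\mathfrak{a}_i \leq 2^{-i}\Delta$ together with Theorem \ref{3c} (valid since $|A_i| \leq |A| \ll p^{1/2}$) will give
\[ \|F_{\mathfrak{a}_i}\|_{L^6}^6 \leq (2^{-i}\Delta)^6 J_3(A_i) \ll (2^{-i}\Delta)^6 |A_i|^{35/9}, \]
and extracting a sixth root then using $\|\mathfrak{a}_i\|_2 \geq 2^{-i-1}\Delta |A_i|^{1/2}$ yields the single-level bound
\[ \|F_{\mathfrak{a}_i}\|_{L^6} \ll 2^{-i}\Delta |A_i|^{35/54} \ll \|\mathfrak{a}_i\|_2 \cdot |A_i|^{4/27} \leq |A|^{4/27} \|\mathfrak{a}_i\|_2, \]
the crucial exponent being $4/27 = 35/54 - 1/2$.

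Finally, the triangle inequality followed by Cauchy--Schwarz across the $L+1 = O(\log |A|)$ dyadic levels will deliver
\[ \|F_{\mathfrak{a}}\|_{L^6} \leq \sum_{i=0}^{L} \|F_{\mathfrak{a}_i}\|_{L^6} \ll |A|^{4/27}(L+1)^{1/2}\Big(\sum_{i=0}^{L}\|\mathfrak{a}_i\|_2^2\Big)^{1/2} \ll (\log |A| + 1)^{1/2} |A|^{4/27} \|\mathfrak{a}\|_2, \]
using that the $A_i$ partition $\mathrm{supp}(\mathfrak{a})$ and hence $\sum_i \|\mathfrak{a}_i\|_2^2 = \|\mathfrak{a}\|_2^2$. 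There is no genuine obstacle here; all the depth is already absorbed into Theorem \ref{3c}, and the $(\log|A|+1)^{1/2}$ factor is precisely the price of the Cauchy--Schwarz step over the dyadic levels.
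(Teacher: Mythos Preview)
Your proof is correct and follows essentially the same approach as the paper: both arguments reduce to the non-negative case, dyadically decompose the amplitude function into level sets, apply Theorem~\ref{3c} to bound $J_3$ on each level set, and then combine the pieces via the triangle inequality together with Cauchy--Schwarz over the $O(\log|A|)$ dyadic scales. The only cosmetic differences are that the paper first normalises so that $\|\mathfrak{a}\|_2=1$ and indexes the level sets from below by $2^{j}/|A|$, while you work with $\Delta=\|\mathfrak{a}\|_\infty$ and index from above by $2^{-i}\Delta$; correspondingly the paper records the input from Theorem~\ref{3c} as $J_3(A')\le X|A'|^3$ with $X\ll |A|^{8/9}$, whereas you keep $J_3(A_i)\ll |A_i|^{35/9}$ and pass to $|A_i|^{4/27}\le |A|^{4/27}$ at the end, which is exactly the same inequality.
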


We note that the Cauchy-Schwarz bound in this setting is 
\begin{equation} \label{vach}
 \n{F_{\mathfrak{a}}}_{L^6} \ll |A|^{1/6} \n{\mathfrak{a}}_{2}  , 
 \end{equation}
see \S6 for a proof of this. Thus, we improve upon this estimate by a factor of $|A|^{1/54 - o(1)}$. On the other hand, we may set $p$ to be some large prime, $I = \{1,2,\dots, \floor{10^{-1} p^{1/3}}\}$ and $A = I (\mod p)$ and $\mathfrak{a} = \mathds{1}_A$ to be the indicator function of the set $A$ to deduce the lower bound
\[  \n{F_{\mathfrak{a}}}_{L^6}  = J_3(A)^{1/6} = J_{3,2}(I)^{1/6}   \gg  |I|^{1/2} ( \log |I|+ 1)^{1/6} = \n{\mathfrak{a}}_{2}  ( \log|A| + 1)^{1/6}, \]
where the inequality in the middle may be discerned from inequality (2.51) in \cite{Bo1993}. Corollary \ref{cor2} can be interpreted as a finite field analogue of the discrete Fourier restriction phenomenon for the parabola in $\mathbb{Z}^2$, see \cite{Bo1993, GMW2020, GLY2021} for more details about the latter. Moreover, Corollary \ref{cor2} recovers the conclusion of Theorem \ref{3c} up to a factor of $(\log |A| + 1)^3$; this can be seen by noting \eqref{jj3}, setting $\mathfrak{a}= \mathds{1}_A$ in Corollary \ref{cor2} and applying orthogonality.

We recall that Theorem \ref{3c} delivers non-trivial estimates for all sets $A \subseteq \mathbb{F}_p$ satisfying $|A| \ll p^{1/2}$. When $A$ is significantly sparser, say, $|A| \ll \log \log \log p$, then one may employ results of Grosu \cite{Gr2014} to transfer to the setting when one is studying the corresponding problem for arbitrary finite subsets of $\mathbb{C}$. Moreover, since we have a complex analogue of the Szemer\'{e}di--Trotter theorem (see \cite{To2014, Za2015}), one may be able to obtain much stronger versions of Theorem \ref{3c}, akin to the estimate recorded in \eqref{gel}, in the case when $|A| \ll \log \log \log p$, by following the circle of ideas presented in \cite{Mu2023a}. 

We now provide a brief outline of our paper. In \S2, we present various incidence geometric and combinatorial lemmata that we will require throughout our paper, along with the proof of Proposition \ref{lset}. We employ \S3 to reduce the proof of Theorem \ref{3c} to proving various incidence results between special families of hyperbolae and point sets of the form $A\times A$. We record the proofs of the latter estimates in \S4. We employ \S5 to prove Theorems \ref{4c} and \ref{smp}, and in \S6, we present the proofs of Corollaries \ref{cor1} and \ref{cor2}.

\textbf{Notation.} In this paper, we use Vinogradov notation, that is, we write $X \gg_{z} Y$, or equivalently $Y \ll_{z} X$, to mean $X \geq C_{z} |Y|$ where $C$ is some positive constant depending on the parameter $z$. We further write $X = O_{z}(Y)$ to mean $X \ll_{z} Y$. Given a subset $A$ of some abelian group $G$, we use $\mathds{1}_A$ to denote the indicator function of the set $A$, that is, $\mathds{1}_A(n) = 1$ whenever $n \in A$ and $\mathds{1}_A(n) = 0$ for every $n \in G \setminus A$. We will use boldface to denote elements $\vec{v} = (v_1, v_2)$ of $\mathbb{F}_p^2$.

\textbf{Acknowledgements.} The authors are grateful to Misha Rudnev for many valuable discussions. The authors would also like to thank Jori Merikoski, Olly Roche-Newton, Audie Warren, Trevor Wooley and Josh Zahl for helpful comments. The authors would like to thank Igor  Shparlinski for pointing to the reference \cite{SS2018}. The second author is supported by Ben Green's Simons Investigator Grant, ID 376201.


\section{Preliminaries}

Let $G$ be a group with the addition operation, and let $A, B$ be finite, non-empty subsets of $G$. We then denote
\[ A+ B =  \{ a + b : a \in A, b \in B\} \ \ \text{and} \ \ -B = \{ - b : b \in B \}. \]
Moreover given integer $s \geq 2$, we recall the definition of the $s$-fold sumset
\[ sA = \{ a_1 + \dots + a_s : a_1, \dots, a_s \in A\}.\]
Next, let $G$ be a group with the product operation, and let $A,B$ be finite non-empty subsets of $G$. Then we define
\[ A \cdot B  = \{ a \cdot b : a \in A, b \in B\} \ \ \text{and} \ \ B^{-1} = \{ b^{-1} : b \in B\}. \]

Furthermore, as in \eqref{hr123}, given any function $w : \mathbb{F}_p \times \mathbb{F}_p \to [0, \infty)$, we write
\[\n{w}_2^2 = \sum_{\vec{x} \in \mathbb{F}_p \times \mathbb{F}_p} w(\vec{x})^2 \ \ \text{and} \ \ \n{w}_1 = \sum_{\vec{x} \in \mathbb{F}_p \times \mathbb{F}_p} w(\vec{x}) \ \  \text{and} \ \  \n{w}_{\infty} = \max_{\vec{x} \in \mathbb{F}_p \times \mathbb{F}_p} w(\vec{x}).  \]

In our proof of Theorem \ref{3c}, we will be interested in estimating the number of solutions to the equation
\[ (x - a)(y - b) = c, \]
with $x, y \in A$, for fixed choices of $a,b \in \mathbb{F}_p$ and $c\in \mathbb{F}_p \setminus \{0\}$. This is equivalent to counting solutions to the equation
\begin{equation} \label{mob1}
y = b + \frac{c}{x - a} =  \frac{bx - ab + c}{x - a} 
\end{equation}   
with $x,y \in A$. This naturally leads us to consider M\"{o}bius transformations. In particular, given $a,b,c,d \in \mathbb{F}_p$ with $ad - bc \neq 0$, we define the M\"{o}bius transformation map $g : \mathbb{F}_p \to \mathbb{F}_p$ as
\[ g(x) = \frac{ax + b}{cx + d} \]
for every $x \in \mathbb{F}_p$. One can check that the set of M\"{o}bius transformations on $\mathbb{F}_p$, under composition, forms a group isomorphic to $\PGL_2(\mathbb{F}_p)$. Indeed, for each such M\"{o}bius transformation, one may associate the matrix $M \in \GL_2(\mathbb{F}_p)$ given by
\[  M = \begin{pmatrix}
a & b  \\
c & d
\end{pmatrix} , \]
with this association only being unique up to multiplication by a non-zero scalar, since for any $\lambda \neq 0$, the M\"{o}bius transformations associated with the matrices 
\[   \begin{pmatrix}
a & b  \\
c & d
\end{pmatrix}  \ \ \text{and} \ \ \begin{pmatrix}
\lambda a & \lambda b  \\
\lambda c & \lambda d
\end{pmatrix}  \]
are the same. From the preceding discussion, one can deduce that whenever the matrices $M_1$ and $M_2$ are associated with the M\"{o}bius transformations $g_1$ and $g_2$ respectively, then the matrix $M_1 M_2$ is associated with the M\"{o}bius transformation $g_1 \circ g_2$.

Given a set $A \subseteq \mathbb{F}_p$ and a set $H$ of M\"{o}bius transformations of the form \eqref{mob1}, estimates for the number of solutions to the equation $h(a_1) = a_2$, with $h \in H$ and $a_1, a_2 \in A$, have been analysed in work of Bourgain \cite{Bo2012}, Shkredov \cite{Shk2021}, and Rudnev--Wheeler \cite{RW2022}, with such results having various applications to sum-product type problems, see \cite{JM2022, RW2022, Shk2021, Shk2023}. For our purposes, we will require the following incidence estimate from \cite{RW2022}.

\begin{lemma} \label{rich}
   Let $k,r \geq 2$ be integers, let $A$ be a non-empty subset of $\mathbb{F}_p$ with $|A| \ll p^{1/2}$, let $g_1, \dots, g_r : \mathbb{F}_p \to \mathbb{F}_p$ be M\"{o}bius transformations such that 
   \[ \min_{1 \leq i \leq r} \sum_{a_1, a_2 \in A} \mathds{1}_{a_2 = g_i(a_1)} \geq k .\]
     Then we have that
     \[ r \ll |A|^{7} k^{-5}. \]
\end{lemma}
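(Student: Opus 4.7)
My approach is to convert the richness condition into a point-plane incidence question in $\mathbb{F}_p^3$ and then invoke Rudnev's point-plane incidence theorem. Writing $g_i(x) = (\alpha_i x + \beta_i)/(\gamma_i x + \delta_i)$ and passing to an affine chart of $\mathbb{P}^3(\mathbb{F}_p)$ (say by dehomogenising with $\delta_i = 1$, after segregating the at most $O(|A|^2)$ transformations with $\delta_i = 0$ for separate, essentially direct, treatment), I would identify each $g_i$ with a point in $\mathbb{F}_p^3$. The key identity is that the condition $g_i(a) = b$ unfolds to the linear equation $a \alpha_i + \beta_i - (ab) \gamma_i = b$, which cuts out a plane in $\mathbb{F}_p^3$. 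This converts the assumption of $r$ rich Möbius transformations into a clean incidence configuration: a set $P$ of $r$ points, a family $\Pi$ of $|A|^2$ planes, and at least $rk$ incidences between them.

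The second step is to apply Rudnev's point-plane incidence theorem, whose hypotheses are met thanks to the bound $|A| \ll p^{1/2}$ (which in particular keeps $|\Pi| = |A|^2 \ll p$, well within the effective regime). The theorem yields an estimate of the schematic form $I(P, \Pi) \ll |P|^{3/4}|\Pi|^{3/4} + K|\Pi|$, where $K$ denotes the maximum number of points of $P$ lying on a single line of $\mathbb{F}_p^3$. Substituting $|P| = r$, $|\Pi| = |A|^2$, and $I \geq rk$ gives $rk \ll r^{3/4}|A|^{3/2} + K|A|^2$, from which the desired bound on $r$ should fall out after the two terms are balanced against a suitable bound on $K$.

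The hard part, and the main obstacle, is controlling $K$. A line in the coefficient space $\mathbb{F}_p^3$ parameterises a linear pencil of Möbius transformations, and the structural fact I would exploit is that two distinct Möbius transformations can agree on at most two arguments (since equating them produces, after clearing denominators, a polynomial equation of degree at most two in the variable). Consequently, the $A \times A$-incidences of two distinct rich members of a pencil overlap in only $O(1)$ pairs, so $K$ rich members of a pencil contribute at least $K(k - O(1))$ essentially distinct incidences to $A \times A$, forcing $K \ll |A|^2/k$. Feeding this back into Rudnev's bound and optimising across the two terms is then intended to deliver the exponent $|A|^7 k^{-5}$. The delicate work will be in the collinearity step: verifying that no degenerate pencil (for instance, a one-parameter subgroup of $\PGL_2(\mathbb{F}_p)$ with many common fixed points) can evade the bound, handling the chart exceptions uniformly, and tracking how the assumption $|A| \ll p^{1/2}$ must be used to remain inside the hypotheses of Rudnev's theorem at every stage.
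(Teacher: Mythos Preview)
The paper does not prove this lemma itself; it is quoted from Rudnev--Wheeler \cite{RW2022}. Your approach is the machinery underlying that proof, but your outline has a genuine gap. The direct point--plane setup (points $P$ = the $r$ M\"obius maps in an affine chart of $\PGL_2$, planes $\Pi$ = the $|A|^2$ conditions $g(a)=b$) only sits inside the hypotheses of Rudnev's theorem when $|P|\le|\Pi|$, i.e.\ when $r\le|A|^2$. In that regime, with the collinearity bound $K\ll|A|^2/k$ and the \emph{correct} Rudnev estimate $I\ll|P|^{1/2}|\Pi|+K|\Pi|$ (not the $|P|^{3/4}|\Pi|^{3/4}$ form you wrote), one actually obtains $r\ll|A|^4/k^2$, stronger than what is claimed. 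The difficulty is the complementary range $r>|A|^2$: there one must use the dual form of Rudnev's theorem, and the relevant parameter becomes the maximum number of the $|A|^2$ planes containing a common line. But for each fixed $a\in A$ the $|A|$ planes $a\alpha+\beta-ab\gamma=b$ (as $b$ ranges over $A$) all contain the line $\{\gamma=-1/a,\ \beta=-a\alpha\}$, so this parameter is at least $|A|$ and the dual bound collapses to the vacuous inequality $k\ll|A|$. Your assertion that optimising the two terms yields $|A|^7k^{-5}$ also does not follow from what you wrote: with your stated form of Rudnev the main term gives $r\ll|A|^6k^{-4}$, and with the correct form $r\ll|A|^4k^{-2}$, neither of which is $|A|^7k^{-5}$.

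The Rudnev--Wheeler argument avoids the large-$r$ obstruction by a preliminary Cauchy--Schwarz step, passing from $\sum_i |g_i(A)\cap A|$ to a bilinear quantity over pairs $(g_i,g_j)$ before invoking the point--plane bound; the exponent $|A|^7k^{-5}$ emerges from that two-stage argument rather than from a single application. Your plan has the right tool but is missing this reduction. As a smaller point, the claim that at most $O(|A|^2)$ of the $g_i$ have $\delta_i=0$ is not justified: that locus is a two-parameter family in $\PGL_2$, and bounding the $k$-rich members of it already requires an argument of the same type you are trying to set up.
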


Another incidence estimate that we will require in this paper, and in particular, in the proofs of Theorems \ref{4c} and \ref{smp}, is the following result given by Stevens--de-Zeeuw \cite{SZ2017}.

\begin{lemma} \label{sdz}
Let $P$ be a set of points in $\mathbb{F}_p^2$ and let $L$ be a set of lines in $\mathbb{F}_p^2$ such that 
\[ |L|^{13} \ll p^{15} |P|^2 \ \ \text{and}  \ \ |P|^{7/8} \leq |L| \leq |P|^{8/7}.\]
Then we have that
\[ \sum_{p \in P} \sum_{l \in L} \mathds{1}_{p \in l}  \ll |P|^{11/15} |L|^{11/15}. \]
\end{lemma}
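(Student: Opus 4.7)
The plan is to deduce Lemma \ref{sdz} from Rudnev's point-plane incidence theorem in $\mathbb{F}_p^3$, which is the standard pathway for non-trivial Szemer\'edi--Trotter-type results in positive characteristic.

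First I would perform a dyadic decomposition of $P$ by richness: write $P_k := \{p \in P : 2^k \leq r(p) < 2^{k+1}\}$, where $r(p)$ is the number of lines of $L$ incident to $p$. A routine pigeonhole then reduces the problem to bounding $2^k |P_k|$ for a single dominating dyadic scale, with only a logarithmic factor to remove afterwards (which is harmless given the room in the hypothesis, or can be removed by a small Cauchy--Schwarz clean-up on the final bound).

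Second, I would lift to three dimensions. Applying Cauchy--Schwarz to $\sum_{p \in P_k} r(p)$ produces a count of ordered line pairs $(\ell, \ell') \in L \times L$ meeting at a common point of $P_k$. Parameterizing each line $\ell \in L$ by its slope--intercept pair $(a,b) \in \mathbb{F}_p^2$ (after removing the $O(|P|)$ contribution from vertical lines), the concurrency condition becomes a linear relation in the four parameters $(a,b,a',b')$. One can then encode the resulting configuration as incidences between a point set $Q \subseteq \mathbb{F}_p^3$ obtained from $P_k \times L$ after a suitable projection, and a family $\Pi$ of planes coming from the remaining data, so that triples of incidences correspond exactly to point-plane incidences. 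Rudnev's theorem then gives
\[ I(Q, \Pi) \ll |Q|^{1/2}|\Pi| + k_0 |\Pi|, \]
and applies provided $|Q| \ll p^2$, which is the role of the hypothesis $|L|^{13} \ll p^{15} |P|^2$: it keeps the lifted point set strictly below the characteristic-$p$ threshold inherent to Rudnev's bound.

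The final step is to handle the degenerate term and optimize. The parameter $k_0$ bounds the number of points of $Q$ on a line in $\mathbb{F}_p^3$, which corresponds to concurrent pencils of lines in the plane; such structures either admit a trivial bound of the shape $|P|+|L|$ or can be absorbed by a preliminary clean-up of the line set. The assumption $|P|^{7/8} \leq |L| \leq |P|^{8/7}$ is precisely the balanced window in which the main term $|Q|^{1/2}|\Pi|$ of Rudnev's bound dominates both the $k_0|\Pi|$ error and the trivial incidence bound, and unwinding the dyadic bookkeeping then produces the symmetric exponent $I(P,L) \ll |P|^{11/15}|L|^{11/15}$. The principal obstacle will be designing the three-dimensional lift carefully enough so that the non-collinearity hypothesis of Rudnev's theorem is genuinely satisfied on a large subset of $Q$; point-plane bounds in $\mathbb{F}_p^3$ are notoriously sensitive to such degenerate configurations, and ensuring that concurrent pencils do not dominate the count is where the real combinatorial work lies.
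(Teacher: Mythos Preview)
The paper does not prove this lemma; it is quoted as a black box from Stevens--de Zeeuw \cite{SZ2017}. Your plan to deduce it from Rudnev's point--plane theorem in $\mathbb{F}_p^3$ is exactly the route taken in that reference, so at the level of strategy you have identified the correct tool.

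The outline as written, however, does not yet reach the exponent $11/15$. After your Cauchy--Schwarz step you are looking at $\sum_{p\in P_k} r(p)^2$, the count of ordered line pairs concurrent at a point of $P_k$; but two distinct lines meet in at most one point, so this second moment is already bounded trivially by $I(P_k,L)+|L|^2$, and the natural lift of the flag set $\{(p,\ell):p\in\ell,\ p\in P_k\}\subseteq\mathbb{F}_p^3$ against planes indexed by $L$ runs straight into the collinearity obstruction you flag: fixing $p$ produces a collinear family in $Q$ of length $r(p)\approx 2^k$, so the error term $k_0|\Pi|$ in Rudnev's bound is of the same order as the main term and the lift gains nothing over the trivial estimate. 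The Stevens--de Zeeuw argument organises the lift differently and couples it with an energy inequality so that the collinearity parameter is genuinely traded against the main term; it is this balancing, not a cosmetic clean-up, that pushes the exponent from the Euclidean $2/3$ up to $11/15$ and produces the threshold $|L|^{13}\ll p^{15}|P|^2$. So you have the right tool and the right instinct about where the obstacle lies, but the specific lift you sketch is not the one that works, and the outline does not yet account for where $11/15$ comes from.
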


We may combine this with the standard Cauchy-Schwarz bound (see \cite[Lemma 1]{SZ2017})
\[ \sum_{p \in P} \sum_{l \in L} \mathds{1}_{p \in l}  \ll \min\{ |P|^{1/2} |L| + |P|, |L|^{1/2} |P| + |L|\} \]
to get that whenever $|L|^{13} \ll p^{15}$, we have 
\begin{equation} \label{sz2}
 \sum_{p \in P} \sum_{l \in L} \mathds{1}_{p \in l} \ll  |P|^{11/15} |L|^{11/15} + |P| + |L|.  
\end{equation}

We would also require the following lemma \cite[Lemma 3.3]{Mu2022} that allows one to convert incidence estimates between points and lines to their weighted versions. We note that while the version recorded in \cite{Mu2022} is stated only for points and varieties in $\mathbb{R}^d$, its proof extends to the finite field setting mutatis mutandis.

\begin{lemma} \label{wt}
    Let $a,b \in (1/2, 1)$ be real numbers, let $C >0$ be a constant, let $P$ be a non-empty subset of $\mathbb{F}_p \times \mathbb{F}_p$, let $L$ be a set of lines in $\mathbb{F}_p \times \mathbb{F}_p$ such that for every non-empty $L' \subseteq L$ and every non-empty $P' \subseteq P$, we have
    \[ \sum_{x \in P'} \sum_{l \in L'} \mathds{1}_{x \in l} \leq C ( |P'|^a |L'|^b + |P'| + |L'|). \]
    Then for every $w: P \to \mathbb{N} \cup \{0\}$ and for every $w': L \to \mathbb{N} \cup \{0\}$, we have
    \[  \sum_{x \in P} \sum_{l \in L} w(x) w'(l) \mathds{1}_{x \in v} \ll C  ( \n{w}_2^{2-2a} \n{w}_1^{2a - 1} \n{w'}_2^{2- 2b}\n{w'}_1^{2b-1} + \n{w}_1 \n{w'}_{\infty} + \n{w}_{\infty} \n{w'}_{1}). \]
\end{lemma}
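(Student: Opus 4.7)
The plan is to dyadically decompose both weights $w$ and $w'$, invoke the unweighted incidence hypothesis on each pair of level sets, and then carefully sum the resulting bounds to extract the claimed $\ell^1$--$\ell^2$ norm combinations. For each integer $i,j \geq 0$, set
\[
P_i = \{x \in P : 2^i \leq w(x) < 2^{i+1}\} \quad \text{and} \quad L_j = \{l \in L : 2^j \leq w'(l) < 2^{j+1}\},
\]
so that $w(x) \asymp 2^i$ on $P_i$ and $w'(l) \asymp 2^j$ on $L_j$. Writing $S := \sum_{x,l} w(x) w'(l) \mathds{1}_{x \in l}$, the hypothesis applied to $P' = P_i$ and $L' = L_j$ (whenever both are non-empty) yields
\[
S \ll \sum_{i,j} 2^{i+j} \sum_{x \in P_i} \sum_{l \in L_j} \mathds{1}_{x \in l} \ll C \sum_{i,j} 2^{i+j} \bigl( |P_i|^a |L_j|^b + |P_i| + |L_j| \bigr).
\]

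The resulting sum separates into three pieces, and the principal task is to prove the one-variable bound $\sum_i 2^i |P_i|^a \ll \n{w}_2^{2-2a} \n{w}_1^{2a-1}$ (with the analogous estimate for $w'$); the product of the two then yields the main term. The definitions of $P_i$ and of the norms immediately give two upper bounds
\[
|P_i| \leq \n{w}_1 \cdot 2^{-i} \quad \text{and} \quad |P_i| \leq \n{w}_2^{2} \cdot 2^{-2i}.
\]
Setting $i_0 := \log_2 (\n{w}_2^2 / \n{w}_1)$, the first bound is tighter precisely for $i \leq i_0$ and the second for $i \geq i_0$. On $i \leq i_0$ one has $2^i |P_i|^a \leq 2^{i(1-a)} \n{w}_1^a$, which is an increasing geometric series in $i$ since $a < 1$, and so the sum is controlled by its boundary value $2^{i_0(1-a)} \n{w}_1^a$. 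On $i \geq i_0$ one has $2^i |P_i|^a \leq 2^{i(1-2a)} \n{w}_2^{2a}$, which is a decreasing geometric series since $a > 1/2$, and so is again controlled by its boundary value $2^{i_0(1-2a)} \n{w}_2^{2a}$. A direct substitution shows that both boundary values equal $\n{w}_2^{2-2a} \n{w}_1^{2a-1}$, yielding the claim.

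For the two error pieces, any $i$ with $P_i \neq \emptyset$ satisfies $2^i \leq 2\n{w}_\infty$, so the geometric sum $\sum_{i : P_i \neq \emptyset} 2^i \ll \n{w}_\infty$. Factoring,
\[
\sum_{i,j} 2^{i+j} |P_i| \ll \Bigl(\sum_i 2^i |P_i|\Bigr) \Bigl(\sum_{j : L_j \neq \emptyset} 2^j\Bigr) \ll \n{w}_1 \n{w'}_\infty,
\]
and symmetrically $\sum_{i,j} 2^{i+j} |L_j| \ll \n{w}_\infty \n{w'}_1$. Combining the three contributions gives the lemma.

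The main obstacle is the sharpness of the estimate $\sum_i 2^i |P_i|^a \ll \n{w}_2^{2-2a} \n{w}_1^{2a-1}$: a naive application of H\"older to this sum introduces a spurious logarithmic loss in the number of non-empty dyadic levels. Splitting the sum at the crossover $i_0$, and exploiting the fact that $a \in (1/2, 1)$ makes the two resulting series geometric in opposite directions, is precisely what removes this loss. Once this inequality is in place, the remaining book-keeping is routine.
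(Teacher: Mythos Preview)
Your proof is correct. The paper does not supply its own argument for this lemma; it simply cites \cite[Lemma~3.3]{Mu2022} and remarks that the proof there carries over to $\mathbb{F}_p^2$ mutatis mutandis. Your dyadic level-set decomposition, with the crossover split at $i_0 = \log_2(\n{w}_2^2/\n{w}_1)$ to kill the potential logarithmic loss, is precisely the standard mechanism behind such weighted-from-unweighted incidence bounds and is what one would expect to find in the cited reference.
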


We will now present a standard result in additive combinatorics known as the Pl\"{u}nnecke-Ruzsa inequality \cite[Corollary 6.29]{TV2006}, which, in the situation when $|A+B| \leq K|B|$, allows us to efficiently bound many-fold sumsets of $A$.

\begin{lemma} \label{prin}
    Let $A,B$ be finite, non-empty subsets of some abelian group $G$ satisfying $|A+B|\leq K|B|$. Then for every $k \in \mathbb{N}$, we have that
    \[  | kA- kA| \leq K^{2k} |B|. \]
\end{lemma}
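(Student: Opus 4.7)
The plan is to deduce the stated bound by invoking two classical tools of additive combinatorics: the Pl\"{u}nnecke--Petridis inequality and Ruzsa's triangle inequality. Since this is a standard consequence recorded in textbooks such as Tao--Vu, the work amounts to assembling these ingredients in the correct order.

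First, I would use the hypothesis $|A + B| \leq K|B|$ together with the Pl\"{u}nnecke--Petridis inequality to extract a non-empty subset $B' \subseteq B$ with the strengthened property that for every $n \geq 1$,
\[ |nA + B'| \leq K^n |B'|. \]
The standard route is to choose $B'$ to be a subset of $B$ minimising the ratio $|A + X|/|X|$ over non-empty $X \subseteq B$, so that this minimum ratio, call it $K'$, satisfies $K' \leq K$. Petridis' lemma then asserts that for any finite set $C$ one has $|A + B' + C| \leq K'|B' + C|$, and iterating this with $C = (n-1)A, (n-2)A, \dots, A$ delivers the claimed bound.

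Next, I would convert this sumset bound to a bound on $|kA - kA|$ by means of Ruzsa's triangle inequality. In its additive form, obtained from the usual difference-set version by reflecting one of the sets, this reads: for any finite sets $X, Y$ and non-empty $Z$ in an abelian group,
\[ |X - Y| \cdot |Z| \leq |X + Z| \cdot |Y + Z|. \]
Applying this with $X = Y = kA$ and $Z = B'$, together with the previous step, yields
\[ |kA - kA| \cdot |B'| \leq |kA + B'|^2 \leq K^{2k} |B'|^2, \]
and hence $|kA - kA| \leq K^{2k}|B'| \leq K^{2k}|B|$, as desired.

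The argument is essentially formal once Pl\"{u}nnecke--Petridis and Ruzsa's triangle inequality are in hand, so there is no real obstacle; the only point requiring a modicum of care is the passage between sum- and difference-set versions of Ruzsa's triangle inequality, but this is immediate in an abelian group via $|X - Y| = |X + (-Y)|$ and $|-Y| = |Y|$. Given that the lemma is invoked here purely as a black box, it would also be entirely reasonable simply to cite Tao--Vu and omit the proof.
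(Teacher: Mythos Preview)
Your proposal is correct and gives the standard proof via Pl\"{u}nnecke--Petridis and Ruzsa's triangle inequality. The paper does not supply a proof at all: it simply states the lemma as a quotation of \cite[Corollary~6.29]{TV2006}, exactly as you anticipated in your final remark.
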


We will conclude this section by presenting the proofs of inequalities \eqref{lbdl} and Proposition \ref{lset}. We first consider the former, whence, we write, for every $\vec{n} = (n_1, n_2) \in \mathbb{F}_p^2$, the function
\begin{equation}
 r_s(\vec{n}) = \sum_{a_1, \dots, a_s \in A} \mathds{1}_{a_1 + \dots + a_s  = n_1} \mathds{1}_{a_1^2 + \dots + a_s^2 = n_2}. 
\end{equation}
By double counting, we have that
\begin{equation} \label{dc12}
 \sum_{\vec{n} \in \mathbb{F}_p^2} r_s(\vec{n}) = |A|^s \ \ \text{and} \ \  \sum_{\vec{n} \in \mathbb{F}_p^2} r_s(\vec{n})^2 = J_s(A). 
 \end{equation}
Writing $\mathscr{A} = \{(a,a^2) : a \in A\}$ and $s\mathscr{A} = \{ \vec{a}_1 + \dots + \vec{a}_s : \vec{a}_1, \dots, \vec{a}_s \in \mathscr{A}\}$, we see that the function $r_s$ is supported on $s \mathscr{A}$. Setting $A = [N] (\mod p)$ for some $N < p$, we observe that $|s \mathscr{A}| \leq \min\{sNp, s^2N^3\}$. Thus, applying the Cauchy-Schwarz inequality, we get that 
\[ J_s(A) \geq \frac{|A|^{2s} }{|s \mathscr{A}|} \gg_s \frac{|A|^{2s-1} }{p} + |A|^{2s-3}. \]

Furthermore, noting the diagonal solutions $x_i = x_{i+s}$, for every $1 \leq i \leq s$, of \eqref{vino4}, we see that $J_s(A) \geq |A|^s$, and this finishes the proof of inequality \eqref{lbdl}. We conclude this section by presenting the proof of Proposition \ref{lset}.
 
\begin{proof}[Proof of Proposition \ref{lset}]
    Let $p \geq 10$ be a prime number, let  $P, X \subseteq \mathbb{F}_p^2$ be non-empty sets, let 
    \[ l_{\vec{x}} = \{ (t, (t - x_1)^2 + x_2) : t \in \mathbb{F}_p \} \]
    for every $\vec{x} = (x_1, x_2) \in\mathbb{F}_p^2$ and let $L_X = \{ l_{\vec{x}} : \vec{x} \in X\}$. Our first aim is to show that
    \begin{equation} \label{rdj}
 \sum_{\vec{x} \in X} \sum_{\vec{p}\in P} \mathds{1}_{\vec{p} \in l_{\vec{x}}} \ll |P||X|/p  \ + \ p^{1/2} |P|^{1/2}|X|^{1/2} .     \end{equation}
In order to prove this, we closely follow the ideas in \cite{MP2016}, and so, note that
\begin{align*}
\sum_{\vec{x} \in \mathbb{F}_p^2} \Big(  \sum_{\vec{p} \in P}    \mathds{1}_{\vec{p} \in l_{\vec{x}}} \Big)^2
 &  =   \sum_{\vec{p} \in P} \sum_{\vec{x} \in \mathbb{F}_p^2}  \mathds{1}_{\vec{p} \in l_{\vec{x}}}   +    \sum_{\vec{p} \neq \vec{p}' \in P}   \sum_{\vec{x} \in \mathbb{F}_p^2}\mathds{1}_{\vec{p} \in l_{\vec{x}}} \mathds{1}_{\vec{p}' \in l_{\vec{x}}}  \\
 &  = p |P| + |P|(|P|-1),
\end{align*}
where the last step follows from the facts that given two distinct points $\vec{p}, \vec{p}' \in P$, there is exactly one $\vec{x} \in \mathbb{F}_p^2$ such that $\vec{p},\vec{p}' \in l_{\vec{x}}$ and that given $\vec{p} \in P$, there are exactly $p$ choices of $\vec{x} \in \mathbb{F}_p^2$ such that $\vec{p} \in l_{\vec{x}}$. This means that
\begin{align*}
\sum_{\vec{x} \in \mathbb{F}_p^2} \Big(  \sum_{\vec{p} \in P}    \mathds{1}_{\vec{p} \in l_{\vec{x}}}  - |P|/p\Big)^2  
& = \sum_{\vec{x} \in \mathbb{F}_p^2} \Big(  \sum_{\vec{p} \in P}    \mathds{1}_{\vec{p} \in l_{\vec{x}}} \Big)^2 - 2p^{-1}|P|\sum_{\vec{x} \in \mathbb{F}_p^2}  \sum_{\vec{p} \in P}    \mathds{1}_{\vec{p} \in l_{\vec{x}}}  + |P|^2   \\
& = p|P|  + |P|(|P|-1) - |P|^2 = (p-1)|P|.
\end{align*}
Applying the Cauchy-Schwarz inequality along with the above estimate, we find that
\begin{align*}
     \sum_{\vec{x} \in X} \sum_{\vec{p}\in P} \mathds{1}_{\vec{p} \in l_{\vec{x}}} - |P||X|/p 
     & = \sum_{\vec{x} \in X} \Big(  \sum_{\vec{p} \in P}    \mathds{1}_{\vec{p} \in l_{\vec{x}}}  - |P|/p\Big)  \\ 
     & \leq  |X|^{1/2} (p-1)^{1/2} |P|^{1/2},
 \end{align*}
 whence, the claimed estimate \eqref{rdj} follows.

We will now employ a weighted version of \eqref{rdj} to prove that for every $s \geq 3$ and $A \subseteq \mathbb{F}_p$, we have
\begin{equation} \label{udp}
J_s(A) \ll |A|^{2s-1}/p +  p J_{s-1}(A) (\log |A|)^2 . 
\end{equation} 
We point out that \eqref{udp} is only non-trivial when $p \leq |A|^2(\log |A|)^{-2}$ since we have the trivial bound $J_s(A) \leq |A|^2 J_{s-1}(A)$. Moreover, \eqref{udp} may be inductively applied along with the trivial bound $J_{2}(A) \ll |A|^2$ to deduce the desired estimate
\[ J_s(A)  \ll |A|^{2s-1}/p + p^{s-2}|A|^2 (\log |A|)^{2s}  \]
whenever $s \geq 3$, whence our main aim now is to prove \eqref{udp}. Writing
\[ r_j(\vec{n}) = \sum_{\vec{a}_1, \dots, \vec{a}_j \in \mathscr{A}} \mathds{1}_{\vec{n} = \vec{a}_1 + \dots + \vec{a}_j}  \]
for every $j \in \mathbb{N}$ and $\vec{n} \in \mathbb{F}_p^2$, we observe that
\begin{align*}
J_s(A) 
& = \sum_{\vec{a}_1, \dots, \vec{a}_{2s}\in \mathscr{A}}   \mathds{1}_{\vec{a}_1 + \dots + \vec{a}_s - \vec{a}_{s+1} - \dots - \vec{a}_{2s-1} = \vec{a}_{2s}} 
 = \sum_{\vec{p} \in s \mathscr{A} } \sum_{ \vec{x} \in (s-1)\mathscr{A}} r_s(\vec{p}) r_{s-1}(\vec{x}) \mathds{1}_{\vec{p} - \vec{x} \in \mathscr{A} }  \\ 
& \leq  \sum_{\vec{p} \in s \mathscr{A} } \sum_{ \vec{x} \in (s-1)\mathscr{A}} r_s(\vec{p}) r_{s-1}(\vec{x}) \mathds{1}_{\vec{p} \in l_{\vec{x}} } .
\end{align*}  
Let $J, K \in \mathbb{N}$ satisfy $2^{J-1} \leq |A|^{s}  < 2^J$ and $2^{K-1} \leq |A|^{s-1}  < 2^K$. Moreover, given any $j,k \in \mathbb{N}$, we denote 
\[ P_j = \{  \vec{p} \in s\mathscr{A} :  2^{j-1} \leq r_{s}(\vec{p}) < 2^j\}   \ \text{and} \ X_k = \{  \vec{x} \in (s-1)\mathscr{A} :  2^{k-1} \leq r_{s}(\vec{x}) < 2^k  \} . \]
With this in hand, we see that
\begin{align}
 J_s(A) 
 & \leq \sum_{j=0}^J \sum_{k=0}^K  \sum_{\vec{p} \in P_j } \sum_{ \vec{x} \in X_j} r_s(\vec{p}) r_{s-1}(\vec{x}) \mathds{1}_{\vec{p} \in l_{\vec{x}} }  
 \ll \sum_{j=0}^J \sum_{k=0}^K 2^j 2^k \sum_{\vec{p} \in P_j } \sum_{ \vec{x} \in X_j} \mathds{1}_{\vec{p} \in l_{\vec{x}} } \nonumber \\
 & \ll  \sum_{j=0}^J \sum_{k=0}^K 2^j 2^k  \big( |P_j||X_k|/p + p^{1/2} |P_j|^{1/2} |X_j|^{1/2}  \big)
\nonumber \\
 & \ll p^{-1}  \sum_{j=0}^J 2^j |P_j|  \sum_{k=0}^K 2^k |X_k|   +  p^{1/2} \sum_{j=0}^J 2^j |P_j|^{1/2}  \sum_{k=0}^K 2^k |X_k|^{1/2} \label{huf}
\end{align}
where the third inequality follows from \eqref{rdj}. Noting \eqref{dc12}, we may deduce that
\[ \sum_{j=0}^J 2^j |P_j| \ll  |A|^s \ \ \text{and} \ \  \sum_{j=0}^J 2^j |P_j|^{1/2} \leq J^{1/2} (\sum_{j=0}^J 2^{2j} |P_j|)^{1/2} \ll_s (\log |A|)^{1/2} J_s(A)^{1/2} .\]
      Substituting these and the corresponding upper bounds for $\sum_{k=1}^K 2^k|X_k|$ and  $\sum_{k=1}^K 2^k|X_k|^{1/2}$ in \eqref{huf} dispenses the estimate
      \[ J_s(A) \ll_s p^{-1} |A|^{2s-1} + p^{1/2} (\log |A|) J_s(A)^{1/2} J_{s-1}(A)^{1/2} ,\]
      which, in turn, simplifies to give \eqref{udp}. Thus, we conclude the proof of Proposition \ref{lset}.
\end{proof}




\section{Proof of Theorem \ref{3c}}

As before, we consider a prime $p \geq 100$, since for all small primes $p$, our set $A \subseteq \mathbb{F}_p$ has $|A| \leq 100$, whence, we may use the trivial bound $J_s(A) \leq |A|^{2s-2}$ and if necessary, modify the implicit constant in the Vinogradov notation suitably to prove Theorem \ref{3c}. Moreover, recalling \eqref{ep3} and applying orthogonality, we see that
\begin{equation}\label{jj3}
 J_{s}(A) = \n{F_{\mathds{1}_A}}_{L^{2s}}^{2s}  \leq |A|^{2s - 6}  \n{F_{\mathds{1}_A}}_{L^6}^{6}  = |A|^{2s - 6} J_{3}(A),   
\end{equation}
whence it suffices to show that $J_{3}(A) \ll |A|^{4 - 1/9}.$

We now define, for every $m,n \in \mathbb{F}_p$, the quantity $r(m,n)$ to be the number of solutions to the system 
\begin{equation} \label{r1}
    a_1 + a_2 - a_3 = m \ \text{and} \ a_1^2 + a_2^2 - a_3^2 = n,
\end{equation}   
with $a_1, a_2, a_3 \in A$. Moreover, we consider the set
\[ \mathscr{A} = \{(a,a^2) : a \in A \}. \]
We see that $r(m,n)$ naturally relates to $J_3(A)$ via a standard double counting argument which gives us
\[ J_3(A) = \sum_{(m,n) \in 2 \mathscr{A} - \mathscr{A} } r(m,n)^2.  \]

We perform some preliminary pruning, whence, let 
\[ S_1  = \{ (m,n) \in 2 \mathscr{A} - \mathscr{A} : m = 0\} \ \ \text{and} \  S_2  =  \{ (m,n) \in 2 \mathscr{A} - \mathscr{A} : n = 0\} \]
and
\[ S_3  =  \{ (m,n) \in 2 \mathscr{A} - \mathscr{A} :  m^2 - n = 0\}. \]
We claim that
\begin{equation} \label{trivial}
    \sum_{(m,n) \in S_i} r(m,n)^2 \ll |A|^3,
\end{equation}  
for each $1 \leq i \leq 3$. In order to see the $i=1$ case, note that the left hand side in \eqref{trivial} for $i=1$ is bounded above by the number of solutions to the system of equations
\[ a_1 + a_2 - a_3 = a_4 + a_5 - a_6 = 0 \ \text{and} \   a_1^2 + a_2^2 - a_3^2 = a_4^2 + a_5^2 - a_6^2 , \]
with $a_1, \dots, a_6 \in A$. Furthermore, we observe that fixing $a_3, a_5, a_6 \in A$ fixes the value of $a_4$, which in turn, gives $O(1)$ admissible values for $a_1, a_2$, whenceforth, our claim is shown to be true. Similarly, one can prove the $i=2$ case of \eqref{trivial}. We now consider the $i=3$ case, where we observe that the left hand side of $\eqref{trivial}$ when $i=3$ is bounded above by the number of solutions to the system
\[ (a_1 + a_2 - a_3)^2 = a_1^2 + a_2^2 - a_3^2 = a_4^2 + a_5^2 - a_6^2 = (a_4 + a_5 - a_6)^2 , \]
with $a_1, \dots, a_6 \in A$. As before, fixing $a_4, a_5 \in A$ delivers a quadratic equation in $a_6$, whence, there are at most $O(1)$ admissible values of $a_6$. Fixing further the value of $a_3 \in A$ would then deliver at most $O(1)$ admissible values of $a_1, a_2$, and so, we are done.

Thus, writing 
\[ S = \{(m,n) \in 2 \mathscr{A} - \mathscr{A} : m \neq 0 \ \text{and} \ n \neq 0 \ \text{and} \ m^2 - n \neq 0 \}, \]
the preceding discussion implies that
\[ J_{3}(A) \ll |A|^3 + \sum_{(m,n) \in S} r(m,n)^2 .   \]
We may split $S = U \cup V$, where $\Delta = |A|^{8/9}$ and
\[ U = \{ (m,n) \in S : r(m,n) \leq \Delta\}   \ \text{and} \ V = S \setminus U.  \]
Note that
\[ \sum_{(m,n) \in U} r(m,n)^2  \leq \Delta \sum_{(m,n) \in U} r(m,n) \leq \Delta |A|^3 \ll |A|^{3 + 8/9}, \]
and so, it suffices to prove a similar estimate for $\sum_{(m,n) \in V} r(m,n)^2$.
We now analyse $r(m,n)$ in some more detail, and note that whenever $a_1, a_2, a_3 \in A$ satisfy $\eqref{r1}$, then we have that
\[   2(a_1 - m)(a_2- m) = m^2-n . \]
In particular the point $(a_1, a_2) \in A^2$ lies on the hyperbola $h_{m,n}$ given by
\[ h_{m,n} = \{(x,y) \in \mathbb{F}_p \times \mathbb{F}_p :  (m- x)(m-y) = (m^2 - n)/2 \}.  \]
Here, we see that there is a natural bijection from pairs $(m,n) \in V$ to the set of hyperbolae $H = \{ h_{m,n} : (m,n) \in V\}$. Thus, we have that
\begin{equation} \label{ref2}
      \sum_{(m,n) \in V} r(m,n)^2 \leq \sum_{(m,n) \in V}  |h_{m,n} \cap (A \times A)|^2   . 
\end{equation} 
We now partition $V$ as $V = V_1 \cup \dots \cup V_r$, for some $r \in \mathbb{N}$ satisfying 
\begin{equation} \label{r3}
    2^r \Delta \ll |A|,
\end{equation} 
such that for every $1 \leq i \leq r$, we have
\[ V_i = \{ (m,n) \in V : 2^{i-1} \Delta < |h_{m,n} \cap (A \times A)| \leq 2^i \Delta \}. \]
Our aim now will be to show that for every $1 \leq i \leq r$, we have
\begin{equation} \label{ref1}
    |V_i| \ll |A|^7 (2^i \Delta)^{-11/2}. 
\end{equation} 
Note that \eqref{ref1} implies that
\begin{align*}
\sum_{(m,n) \in V}  |h_{m,n} \cap (A \times A)|^2   
& = \sum_{1 \leq i \leq r} \sum_{(m,n) \in V_i}  |h_{m,n} \cap (A \times A)|^2  \\
& \ll \sum_{1 \leq i \leq r} |V_i| (2^i \Delta)^2 \ll \sum_{1 \leq i \leq r} |A|^7 (2^i \Delta)^{-7/2} \\
& \ll |A|^7 \Delta^{-7/2}  = |A|^{4 - 1/9},
\end{align*} 
which, when combined with \eqref{ref2}, delivers the desired bound.

Thus, we will now focus on proving that \eqref{ref1} holds true, and we will show this by the means of the following incidence estimate.

\begin{lemma} \label{hypinc}
  Let $W$ be a finite, non-empty subset of $V$ such that $|W| \gg |A|^{3/2}$. Then we have
  \[ \sum_{(m,n) \in W} |h_{m,n} \cap (A\times A)| \ll |A|^{14/11}|W|^{9/11}. \]
\end{lemma}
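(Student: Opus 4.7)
The plan has three main parts.

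First, I would identify each hyperbola $h_{m,n}$ with a Möbius transformation. The constraints defining $V$, namely $m \neq 0$, $n \neq 0$, and $m^2 - n \neq 0$, guarantee that $c := (m^2-n)/2$ is nonzero and that the condition $(a_1,a_2) \in h_{m,n}$ is equivalent (up to the exceptional case $a_1 = m$, which contributes a bounded error) to $a_2 = g_{m,n}(a_1)$, where $g_{m,n}(x) = m - c/(x-m)$ is a genuine, non-degenerate Möbius transformation of $\mathbb{F}_p$. Since one recovers $m$ and $c$ from $g_{m,n}$, the assignment $(m,n) \mapsto g_{m,n}$ is injective on $V$, so $W$ corresponds to $|W|$ distinct Möbius transformations of this special form.

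Second, I would perform the coordinate change $\phi : (m,n) \mapsto (m, n+m^2)$ in parameter space. Under $\phi$, the incidence condition $(a_1,a_2) \in h_{m,n}$ linearises to $n + m^2 = 2(a_1+a_2)m - 2 a_1 a_2$, which says precisely that the point $\phi(m,n)$ lies on the line $\ell_{a_1,a_2}: y = 2(a_1+a_2)x - 2 a_1 a_2$ in $\mathbb{F}_p^2$. Since the map $(a_1,a_2) \mapsto \ell_{a_1,a_2}$ is at most two-to-one (only the swap $(a_1,a_2) \leftrightarrow (a_2,a_1)$ produces the same line), the sum $\sum_{(m,n) \in W} |h_{m,n} \cap (A \times A)|$ is comparable to the point--line incidence count between $\phi(W)$ (of size $|W|$) and the line family $\{\ell_{a_1,a_2}\}$ (of size $\ll |A|^2$) in $\mathbb{F}_p^2$.

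Third, I would dyadically decompose $W$ by the richness $t(m,n) = |h_{m,n} \cap (A \times A)|$ into level sets $W_T = \{(m,n) \in W : T \leq t(m,n) < 2T\}$ and combine several bounds: the Rudnev--Wheeler bound (Lemma \ref{rich}) gives $|W_T| \ll |A|^7 T^{-5}$, the Stevens--de~Zeeuw bound (Lemma \ref{sdz}) applied in the point--line reduction above gives $|W_T| \ll |A|^{11/2} T^{-15/4}$, and trivially $|W_T| \leq |W|$. Summing $\sum_T T \cdot |W_T|$ against the minimum of these bounds, with the threshold chosen so that the hypothesis $|W| \gg |A|^{3/2}$ places us in the correct regime, should yield the claimed $\sum_{(m,n) \in W} |h_{m,n} \cap (A \times A)| \ll |A|^{14/11}|W|^{9/11}$.

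The hard part will be extracting the exponents $14/11$ and $9/11$ from this optimisation, which is delicate: Lemma \ref{rich} alone yields only $|A|^{7/5}|W|^{4/5}$ and Lemma \ref{sdz} alone yields only $|A|^{22/15}|W|^{11/15}$, both strictly weaker than the target. The sharp exponents appear to require further structural input, most plausibly the observation that any two distinct hyperbolae in our family meet in at most two points (if $m_1 = m_2$ they are disjoint concentric hyperbolae with different constants, and if $m_1 \neq m_2$ their intersection lies on a single affine line $x+y = \text{const}$), which gives a complementary Cauchy--Schwarz bound via $\sum_{(a_1,a_2)} r(a_1,a_2)^2 \leq I + 2|W|^2$ and plays the role of a pseudoline property to tighten the balance between the three dyadic bounds above.
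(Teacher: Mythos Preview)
Your plan has a genuine gap: the dyadic-richness approach you describe cannot produce the exponents $14/11$ and $9/11$. You correctly observe that Lemma~\ref{rich} alone gives $|A|^{7/5}|W|^{4/5}$ and Lemma~\ref{sdz} alone gives $|A|^{22/15}|W|^{11/15}$, and you hope the pseudoline property (two hyperbolae meeting in $O(1)$ points) will close the gap. It does not. The pseudoline input yields $|W_T| \ll |A|^4/T^2$, and summing $T|W_T|$ against the pointwise minimum of all three richness bounds still gives, for instance at $|W|=|A|^2$, only $I \ll |A|^{3+o(1)}$, whereas the target is $|A|^{32/11}$. More generally, for $|A|^{3/2} < |W| < |A|^{16/7}$ every one of your bounds (and hence their minimum) is strictly weaker than $|A|^{14/11}|W|^{9/11}$, and no dyadic recombination of pointwise richness bounds can beat the best individual bound at a given $|W|$.

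The missing idea is not a curve--curve intersection property but a \emph{group-theoretic energy bound} on $H = \{g_{m,n} : (m,n)\in W\}$ inside $\mathrm{PGL}_2(\mathbb{F}_p)$. The paper first Cauchy--Schwarzes in the $a_2$ variable to pass from $I$ to $\mathcal{M} = \sum_{g \in H^{-1}H} q(g)\sum_{a,a'}\mathds{1}_{a=g(a')}$, where $q(g)$ is the representation multiplicity of $g$ in $H^{-1}H$. A second Cauchy--Schwarz then separates $\sum q(g)^2 = E(H)$ from $\sum r(g)^2$, the latter handled by Lemma~\ref{rich} applied to $H^{-1}H$ rather than to $H$. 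The crucial new input is Lemma~\ref{hypen}: after pruning $W$ so that no vertical line and no parabola $y = -(x-\alpha)^2+\beta$ contains more than $\tau$ points of $W$, one has $E(H) \ll \tau |H|^2$. Balancing the pruning cost $|A|^2|W|/\tau$ against $|A|^{6/5}\tau^{1/10}|W|^{4/5}$ forces $\tau = |A|^{8/11}|W|^{2/11}$ and produces exactly $|A|^{14/11}|W|^{9/11}$. Your point--line reduction is morally related to the pruning step (the parabolae in parameter space become lines under $\phi$), but you never form the product set $H^{-1}H$ or invoke its energy, and without that the argument cannot close.
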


Observe that Lemma \ref{hypinc} delivers \eqref{ref1} in a straightforward manner. In order to see this, we first confirm that if $|V_i| \ll |A|^{3/2}$, then
\[ |V_i| \ll |A|^7 |A|^{-11/2} \ll |A|^7 (2^i \Delta)^{-11/2}   \]
holds true by considering \eqref{r3}. On the other hand, if $|V_i| \gg |A|^{3/2}$, then we can apply Lemma \ref{hypinc} to deduce that
\[ |V_i| 2^i \Delta \leq \sum_{(m,n) \in V_i} |h_{m,n} \cap (A\times A)| \ll  |A|^{14/11}|V_i|^{9/11} , \]
which, in turn, simplifies to dispense \eqref{ref1}.

Thus, our next aim is to present the proof of Lemma \ref{hypinc}, and we pursue this in the next section.


\section{Incidence estimates for hyperbolae}

As previously mentioned, our main aim for this section will be to present the proof of Lemma \ref{hypinc}, parts of which will follow the circle of ideas explored in \cite{RW2022, Shk2021}. In this endeavour, we will be required to prove a variety of incidence estimates, and so, we present some further definitions. Given $(m,n) \in V$, we define the M\"{o}bius transformation map $g_{m,n} : \mathbb{F}_p \to \mathbb{F}_p$ as
\[ g_{m,n}(x) =  \frac{ m x  - (m^2 +n)/2}{x - m}  . \]
This is well-defined since for any $(m,n) \in V$, we have $m \neq 0$ and $n - m^2 \neq 0$. Moreover, as discussed in \S2, this M\"{o}bius transformation map can be seen to be affiliated with the matrix 
\[ M_{m,n} =  \begin{pmatrix}
m & - (m^2 +n)/2 \\
1 & -m  
\end{pmatrix}. \]
Note that if $(m,n) \in V$ and $(a_1, a_2) \in h_{m,n}$, then $a_2 = g_{m,n}(a_1).$

We also require some more notation, whence, let $\mathscr{L}_1$ be the set of all the vertical lines in $\mathbb{F}_p \times \mathbb{F}_p$, that is, lines $l$ of the form $x=m$ for some $m \in \mathbb{F}_p$. Similarly let $\mathscr{L}_2$ be the set of all parabolae in $\mathbb{F}_p \times \mathbb{F}_p$ given by the equation $y = -(x-\alpha)^2 + \beta$, for some $\alpha, \beta \in \mathbb{F}_p$.

\begin{lemma} \label{hypen}
Let $\tau >0$ be a real number and let $V'$ be a non-empty subset of $V$ such that 
\[  \max_{l \in \mathscr{L}_1 \cup \mathscr{L}_2} |l \cap V'| \leq \tau. \]
Then, writing $H = \{ g_{m,n} : (m,n) \in V'\}$, we have that
\[ E(H) := | \{ h_1^{-1} \circ h_2 = h_3^{-1} \circ h_4 : h_1, \dots, h_4 \in H\}| \ll |H|^2 \tau.  \]
\end{lemma}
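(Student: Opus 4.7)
The plan hinges on the observation that each $g_{m,n}$ is an involution in $\PGL_2(\mathbb{F}_p)$: since $\operatorname{tr}(M_{m,n}) = 0$, Cayley--Hamilton gives $M_{m,n}^2 = -\det(M_{m,n}) I$, whence $g_{m,n}^{-1} = g_{m,n}$. Thus the defining equation of $E(H)$ simplifies to $h_1 \circ h_2 = h_3 \circ h_4$, and if we set $r(g) = |\{(h, h') \in H^2 : h \circ h' = g\}|$, then $E(H) = \sum_g r(g)^2$ while $\sum_g r(g) = |H|^2$. It therefore suffices to prove the pointwise bound $r(g) \leq \tau$ for every non-identity $g \in \PGL_2(\mathbb{F}_p)$.

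My second move would be to linearise the hypothesis on $V'$ via the bijection $(m,n) \mapsto (u,v) = (m, (m^2+n)/2)$ of $\mathbb{F}_p^2$. Under this change of variables the matrix becomes $\widetilde{M}_{u,v} = \begin{pmatrix} u & -v \\ 1 & -u \end{pmatrix}$; the vertical lines in $\mathscr{L}_1$ are mapped to vertical lines in the new coordinates, and the downward parabolae $y = -(x-\alpha)^2 + \beta$ in $\mathscr{L}_2$ correspond bijectively to the non-vertical lines $v = \alpha u + (\beta - \alpha^2)/2$. Writing $U$ for the image of $V'$, the hypothesis becomes $|U \cap L| \leq \tau$ for every line $L \subset \mathbb{F}_p^2$.

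The final step is a short trace computation. By involutivity, $r(g) = |\{h \in H : h \circ g \in H\}|$. If $g$ has matrix $G = \begin{pmatrix} a & b \\ c & d \end{pmatrix}$ and $h \in H$ corresponds to $(u,v) \in U$, then a direct calculation yields $\operatorname{tr}(\widetilde{M}_{u,v} G) = u(a-d) + b - cv$. For $h \circ g$ to lie in $H$ it must itself be an involution, so this trace has to vanish; this is a non-trivial linear constraint on $(u,v)$ whenever $G$ is not a scalar matrix (equivalently, $g \neq e$). The contributing $(u, v) \in U$ therefore lie on a single line, so $r(g) \leq \tau$. Combining gives $E(H) \leq r(e)^2 + \tau \sum_{g \neq e} r(g) \leq (1+\tau)|H|^2 \ll \tau |H|^2$, using $\tau \geq 1$ whenever $V'$ is non-empty.

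The only real insight required is spotting the pair of structural features that make the estimate collapse: the involution property of $g_{m,n}$, which rewrites the Möbius energy as a convolution count in the group, and the affine change of coordinates that converts the parabolic condition on $V'$ into a purely linear one on $U$. Once both are in place, the entire bound reduces to the trace calculation above and a single application of the line-incidence hypothesis.
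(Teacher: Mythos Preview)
Your argument is correct, and it is genuinely cleaner than the paper's. The paper does not exploit the involution identity $g_{m,n}^{-1}=g_{m,n}$ explicitly; instead it fixes $h_3,h_4$, writes out the product $M_{m_1,n_1}M_{m_2,n_2}^{-1}$ entry by entry, and then splits into the cases $m_1\neq m_2$ and $m_1=m_2$. In the first case it eliminates $s_1,s_2$ among the three proportionality equations to obtain directly that $(m_1,n_1)$ lies on a specific member of $\mathscr{L}_2$; in the second case it lands on a fixed vertical line in $\mathscr{L}_1$. Your approach collapses both cases at once: the linearising substitution $(m,n)\mapsto(u,v)=(m,(m^2+n)/2)$ turns $\mathscr{L}_1\cup\mathscr{L}_2$ into the family of \emph{all} affine lines, and the single trace equation $u(a-d)-cv+b=0$ replaces the paper's elimination. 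What you gain is conceptual transparency (the whole lemma becomes ``trace-zero elements pulled back through left multiplication lie on an affine hyperplane'') and the removal of the case split; what the paper's version offers is an explicit formula for the parabola on which $(m_1,n_1)$ sits, which is not needed here but could be useful if one wanted finer information. Two small points worth making explicit in your write-up: the trace-zero condition is only \emph{necessary} for $h\circ g\in H$, which is all you use; and in the degenerate situation $a=d$, $c=0$, $b\neq 0$ the constraint has no solutions at all, so $r(g)=0$ there, consistent with your bound.
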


With the above lemma in hand, we are now ready to present the proof of Lemma \ref{hypinc}.

\begin{proof}[Proof of Lemma \ref{hypinc}]

We will first define two parameters $\tau, \eta >0$ as
\[ \tau = |A|^{8/11}|W|^{2/11}  \ \ \text{and} \ \ \eta = \tau^{1/5} |A|^{7/5} |W|^{-2/5}  = |A|^{17/11} |W|^{-4/11}.  \]
Since $|W| \leq |V| \leq |A|^3$, we see that $\tau, \eta \gg 1$. Our first step involves an iterative pruning of our set $W$ which removes large intersections with translates of the parabola $y = -x^2$. In particular, set $W_0 = W$ and check whether there exists some $l \in \mathscr{L}_2$ such that $|l \cap W_0| > \tau$. If there exists such an $l$, let $W_1 = W \setminus W_0$ and repeat the preceding step for $W_1$. If not, we end the algorithm. We now suppose that this algorithm stops in $r$ steps, that is, we partition $W$ as
\[ W = W' \cup C_1 \cup \dots \cup C_r, \]
such that 
\[ \max_{l \in \mathscr{L}_2} |l \cap W'| \leq \tau \]
and such that for each $1 \leq i \leq r$, we have $C_i \subseteq l_i$ for some unique $l_i \in \mathscr{L}_2$ as well as that $|C_i| > \tau$. The latter condition implies that
\[ r < |W| / \tau. \]
Suppose that for each $1 \leq i \leq r$, the curve $l_i$ is denoted by the equation
\[ y = -(x - \alpha_i)^2 + \beta_i ,\]
for some $\alpha_1,\dots, \alpha_r,\beta_1,\dots,\beta_r \in \mathbb{F}_p$. Note that
\[ \sum_{(m,n) \in C_i} |h_{m,n }\cap (A \times A) |   \]
is bounded above by the number of solutions to the system of equations
\[   (m- a_1)(m-a_2) =  (m^2 - n)/2 \ \ \text{and} \ \ n = -(m - \alpha_i)^2 + \beta_i,  \]
with $a_1, a_2 \in A$ and $(m,n) \in C_i$. The above system implies that
\begin{equation} \label{pur}
     2m(\alpha_i - a_1 - a_2) + (2a_1a_2 - \alpha_i^2 + \beta_i) = 0.
\end{equation} 
Thus, for every $1 \leq i \leq r$, let $\mathcal{A}_i$ denote the set of all $(a_1, a_2) \in A\times A$ such that 
\[ a_1 + a_2 = \alpha_i \ \ \text{and} \ \ 2a_1a_2 = \alpha_i^2 - \beta_i . \]
Note that $|\mathcal{A}_i| \ll 1$ for every $1 \leq i \leq r$. Moreover, for every fixed $(a_1, a_2) \in (A \times A) \setminus \mathcal{A}_i$, there are at most $O(1)$ solutions to \eqref{pur}. Thus for each $1 \leq i \leq r$, we have
\begin{align*}
   \sum_{(m,n) \in C_i} |h_{m,n }\cap (A \times A) |   \leq  |A|^2 + |\mathcal{A}_i| |\mathcal{C}_i|.
\end{align*}
Summing this over all $1 \leq i \leq r$, we get
\begin{align*}
    \sum_{1 \leq i \leq r}  \sum_{(m,n) \in C_i} |h_{m,n }\cap (A \times A) | 
    &  \ll |A|^2 r + \sum_{1 \leq i \leq r} |\mathcal{C}_i |  \ll |A|^2 |W| / \tau + |W| \\ 
    &  \ll |A|^{14/11}|W|^{9/11}  ,
\end{align*}
with the last bound following from the fact that $|W| \leq |V| \leq |2 \mathscr{A} - \mathscr{A}| \leq |A|^3$. This is the desired upper bound so we now focus on the set $W'$.

 We perform some further pruning, and so, let $l_1, \dots, l_{s}$ be lines in $\mathscr{L}_1$ such that
\[ |l_i \cap W'| > \tau  \]
for every $1 \leq i \leq s$. As before, we have that $s < |W'|/ \tau$. Moreover, we may proceed as in the preceding step to deduce that
\begin{align*}
\sum_{1 \leq i \leq s} \sum_{(m,n) \in l_i \cap W'}   |h_{m,n} \cap (A \times A)| \leq s |A|^2 \ll |W||A|^2/ \tau \ll |A|^{14/11} |W|^{19/11}.  
\end{align*}
This is the required upper bound, and so, it suffices to focus on the set $W'' = W' \setminus (l_1 \cup \dots \cup l_s)$.

We commence our analysis for $W''$ by applying the Cauchy-Schwarz inequality to get
\begin{align*} 
\sum_{(m,n) \in W''}  \sum_{a_1, a_2 \in A} \mathds{1}_{(a_1, a_2) \in h_{m,n}  } 
& =   \sum_{ a_2 \in A} \sum_{(m,n) \in W''} 
 \sum_{a_1 \in A} \mathds{1}_{a_2 = g_{m,n}(a_1)  }  \\
 & \leq |A|^{1/2} \Big( \sum_{ a_2 \in A} \Big( \sum_{(m,n) \in W''} 
 \sum_{a_1 \in A} \mathds{1}_{a_2 = g_{m,n}(a_1)  } \Big)^2 \Big)^{1/2} \\
 &  = |A|^{1/2} \Big(  \sum_{(m,n), (m',n') \in W''}  \sum_{a_1,a_1' \in A} \sum_{a_2 \in A} \mathds{1}_{a_2 = g_{m,n}(a_1) = g_{m',n'}(a_1') }  \Big)^{1/2} \\
 & \leq |A|^{1/2} \Big(  \sum_{(m,n), (m',n') \in W''}  \sum_{a_1,a_1' \in A} \mathds{1}_{g_{m,n}(a_1) = g_{m',n'}(a_1')}  \Big)^{1/2}.
 \end{align*}
Thus, denoting
\[ \mathcal{M} =   \sum_{(m,n), (m',n') \in W''}  \sum_{a_1,a_1' \in A} \mathds{1}_{g_{m,n}(a_1) = g_{m',n'}(a_1')}  ,\]
we claim that
\begin{equation} \label{jk}
    \mathcal{M} \ll \tau^{1/5} |A|^{7/5}|W|^{8/5} .
\end{equation}
Inserting the claimed upper bound in the preceding discussion immediately gives us the desired estimate
\[ \sum_{(m,n) \in W''}  \sum_{a_1, a_2 \in A} \mathds{1}_{(a_1, a_2) \in h_{m,n}  }  \leq |A|^{1/2}\mathcal{M}^{1/2}  \ll |A|^{6/5 }|W|^{4/5} \tau^{1/10}  = |A|^{14/11} |W|^{9/11}, \]
and so, we now focus on proving \eqref{jk}.

We begin the above endeavour by defining
\[ H = \{ g_{m,n} : (m,n) \in W''\} \]
and 
\[ q(g) = |\{ (h_1, h_2) \in H \times H : g = h_1^{-1} \circ h_2 \}| \]
for any M\"{o}bius transformation $g$, which in turn gives us
\begin{align*}
     \mathcal{M} =  \sum_{g \in H \cdot H^{-1}} q(g) \sum_{a,a' \in A} \mathds{1}_{a = g(a')}.
\end{align*} 
We now partition the set $H \cdot H^{-1}$ as $ H\cdot H^{-1} = H' \cup H''$, where
\[ H' = \{ g \in H \cdot H^{-1} : \sum_{a,a' \in A} \mathds{1}_{a = g(a')} > \eta \} \ \ \text{and} \ \  H'' = (H \cdot H^{-1}) \setminus H_1 . \]
We see that
\[  \sum_{g \in H''} q(g) \sum_{a,a' \in A} \mathds{1}_{a = g(a')} \leq \eta \sum_{g \in H''} q(g) \leq  \eta |W|^2 = \tau^{1/5} |A|^{7/5}|W|^{8/5}, \]
which matches the upper bound in \eqref{jk}. Thus, it suffices to bound the contribution to $\mathcal{M}$ from $H'$.

Applying the Cauchy-Schwarz inequality, we infer that
\begin{equation} \label{fhr}
     \sum_{g \in H'} q(g) \sum_{a,a' \in A} \mathds{1}_{a = g(a')} 
    \leq \Big( \sum_{g \in H'} q(g)^2 \Big)^{1/2} \Big( \sum_{g \in H'}   \Big(\sum_{a,a' \in A} \mathds{1}_{a = g(a')} \Big)^2 \Big)^{1/2} .
\end{equation}   
Since
\[ \max_{l \in \mathscr{L}_1 \cup \mathscr{L}_2} |W'' \cap l| \leq \tau,   \]
we may bound the first factor on the right hand side in \eqref{fhr} by applying Lemma \ref{hypen}. In particular, this gives us
\begin{equation} \label{rus}
     \sum_{g \in H'} q(g)^2  \leq E(H) \ll  \tau |H|^2 
= |A|^{8/11} |W|^{24/11} .
\end{equation}  
The second factor on the right hand side in \eqref{fhr} may be estimated by noting that
\[ \sum_{g \in H'}   \Big(\sum_{a,a' \in A} \mathds{1}_{a = g(a')} \Big)^2   = \sum_{1 \leq i \leq r'} \sum_{g \in H_i} \Big(\sum_{a,a' \in A} \mathds{1}_{a = g(a')} \Big)^2 ,  \]
where $r'$ is some natural number satisfying
\[ \eta 2^{r'} \ll |A|,  \ \text{and}  \ H_i = \{ g \in H' : 2^{i-1} \eta < \sum_{a,a' \in A} \mathds{1}_{a = g(a')}  \leq 2^{i} \eta \} \]
for every $1 \leq i \leq r'$. We may now apply Lemma \ref{rich} to deduce that
\[ |H_i| \ll |A|^7 (2^i \eta)^{-5} \ \ \text{for every} \ 1 \leq i \leq r', \]
whereupon, we have
\[  \sum_{1 \leq i \leq r'} \sum_{g \in H_i} \Big(\sum_{a,a' \in A} \mathds{1}_{a = g(a')} \Big)^2 \ll  \sum_{1 \leq i \leq r'} |A|^7 (2^i \eta)^{-3} \ll |A|^7 \eta^{-3} . \]
Combining this with \eqref{fhr} and \eqref{rus}, we find that
\[  \sum_{g \in H'} q(g) \sum_{a,a' \in A} \mathds{1}_{a = g(a')}  \ll (\tau |H|^2 )^{1/2}(|A|^7 \eta^{-3})^{1/2}  
= \tau^{1/5} |W|^{8/5} |A|^{7/5}.
\]
which matches the required bound in \eqref{jk}. 
%
%
%
\end{proof}

Therefore, all that remains now is to prove Lemma \ref{hypen}, which is what we proceed with below.

\begin{proof}[Proof of Lemma \ref{hypen}]
    We start by noting that despite the lack of commutativity, one still has 
    \[ E(H) = | \{ h_1 \circ h_2^{-1} = h_3 \circ h_4^{-1} : h_1, \dots, h_4 \in H\}| .\]
Fixing $h_3, h_4 \in H$ in $|H|^2$ ways, our aim will be to show that the number of choices of $h_1, h_2 \in H$ with $h_1 \circ h_2^{-1} = h_3 \circ h_4^{-1}$ is at most $O(\tau)$. Recall that for any $(m,n) \in V$, we associate the M\"{o}bius transformation $g_{m,n}$ to the matrix
\[ M_{m,n} =  \begin{pmatrix}
m & - (m^2 +n)/2 \\
1 & -m  
\end{pmatrix}. \]
Let $h_1 = g_{m_1,n_1}$ and $h_2 = g_{m_2,n_2}$ for some $(m_1, n_1), (m_2,n_2) \in V'$. Denote $s_i = -(m_i^2 + n_i)/2$ for $i \in \{1,2\}$. Then, $h_2^{-1}$ is associated with the matrix 
\[ 
\begin{pmatrix}
m_2 & s_2  \\
1 & -m_2
\end{pmatrix}
\]
and consequently, $h_1 \circ h_2^{-1}$ is associated with the matrix
\[ \begin{pmatrix}
m_1 & s_1 \\
1 & -m_1  
\end{pmatrix} 
\cdot  \begin{pmatrix}
m_2 &  s_2 \\
1  & -m_2  
\end{pmatrix}^{-1} = 
 \begin{pmatrix}
m_1m_2 + s_1 &   m_1 s_2 - s_1 m_2\\
m_2 - m_1  &   s_2 + m_1m_2
\end{pmatrix} .\]
We divide our proof into two cases, the first being when $m_1 \neq m_2$. Now since $h_1 \circ h_2^{-1} = h_3 \circ h_4^{-1}$ and $h_3, h_4$ are fixed, we see that there exist fixed $\kappa_1, \kappa_2, \kappa_3 \in \mathbb{F}_p$ such that
\begin{align}
 m_1m_2 + s_1 & = (m_2 - m_1) \kappa_1,   \label{aeq1} \\
 m_1 s_2 - s_1 m_2 & = (m_2 - m_1) \kappa_2   , \label{aeq2} \\
  s_2 + m_1m_2  & = (m_2 - m_1) \kappa_3  .\label{aeq3}  
\end{align}
Substituting the values of $s_1$ and $s_2$ from \eqref{aeq1} and \eqref{aeq3} respectively into \eqref{aeq2}, we find that
\[ m_1m_2(m_2 - m_1)
= (m_2 - m_1) (m_2 \kappa_1 + \kappa_2 - m_1 \kappa_3). \]
Simplifying the above yields
\[  m_2 (m_1 - \kappa_1) = \kappa_2 - m_1 \kappa_3.\]
Note that we can not have $m_1 = \kappa_1$, since if so, then \eqref{aeq1} would give us
$s_1 = -m_1^2$, that is, $m_1^2 - n_1 = 0$, which would contradict the fact that $(m_1, n_1) \in V$. Now substituting the value of $m_2$ into \eqref{aeq1}, we get that
\[   s_1 = 
- \kappa_2 + m_1 ( \kappa_3- \kappa_1) . \]
Inserting the definition of $s_1$ in the above, we find that
\[ n_1 =  - m_1^2 + 2m_1 (\kappa_1 - \kappa_3)  + 2 \kappa_2 = -(m_1 - (\kappa_1 - \kappa_3))^2 + (\kappa_1 - \kappa_3)^2 + 2\kappa_2.  \]
Thus $(m_1,n_1)$ lie on a parabola of the form $y = - (x -\alpha)^2 +\beta$ for fixed values of $\alpha, \beta \in \mathbb{F}_p$, whereupon, we may apply the hypothesis of Lemma \ref{hypen} to deduce that there are at most $\tau$ admissible values of $m_1, n_1$. Moreover, fixing $m_1, n_1$ fixes $m_2, n_2$ by considering the above set of equations, whence, we have that the contribution of such solutions to $E(H)$ is $O(|H|^2 \tau)$.



We now analyse the case when $m_1 = m_2$. Here, we immediately see that $m_1 m_2 + s_1$ and $s_2 + m_1m_2$ must be non-zero since the matrix associated with $h_1\circ h_2^{-1}$ is invertible. Thus, as before, we may find fixed $\kappa_1, \kappa_2 \in \mathbb{F}_p \setminus \{0\}$ such that
\begin{align*}
  m_1^2 + s_1 = \kappa_1 m_1(s_2 - s_1) \ \ \text{and} \ \ m_1^2 + s_2 = \kappa_2 m_1 (s_2 - s_1).
\end{align*}
This implies that 
\[ 1 = (\kappa_2 - \kappa_1) m_1 ,\]
whence, we obtain the value of $m_1$, and consequently, $m_2$. We may now apply the hypothesis of Lemma \ref{hypen} to deduce that there are at most $\tau$ admissible values of $n_1$, which in turn, fixes the values of $n_2$ via the preceding set of equations since $\kappa_1,m_1 \neq 0$. Therefore, we have that the contribution of such solutions to $E(H)$ is also $O(|H|^2 \tau)$, and so, we conclude the proof of Lemma \ref{hypen}.
\end{proof}


\section{Proofs of Theorems \ref{4c} and \ref{smp}}

We begin this section by recording some notation, and so, given $\vec{x} \in \mathbb{F}_p^2$, we will denote
\[ l_{\vec{x}} =  \{(t,t^2) : t \in \mathbb{F}_p \} + \vec{x}. \]
Moreover, given a non-empty set $X \subseteq \mathbb{F}_p^2$, we define
\[ L_{X} =   \{l_{\vec{x}} : \vec{x} \in X\} .\]
Our starting point will be to use \eqref{sz2} to obtain the following incidence estimate for sets of points and translates of parabolae in $\mathbb{F}_p^2$.

\begin{lemma} \label{sztr}
Let $P, X$ be non-empty subsets of $\mathbb{F}_p^2$ such that $|X|^{13} \ll p^{15}$. Then, we have
\[ \sum_{\vec{p} \in P} \sum_{\vec{x} \in X} \mathds{1}_{\vec{p} \in l_{\vec{x}}} \ll |P|^{11/15} |X|^{11/15} + |P| + |X|.\]
\end{lemma}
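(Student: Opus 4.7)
The plan is to apply an affine change of coordinates that converts translates of the standard parabola into non-vertical lines, and then invoke the Stevens--de~Zeeuw point-line incidence estimate \eqref{sz2}.

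Concretely, for $\vec{p} = (p_1, p_2) \in \mathbb{F}_p^2$ and $\vec{x} = (x_1, x_2) \in X$, the incidence $\vec{p} \in l_{\vec{x}}$ is equivalent to $p_2 = (p_1 - x_1)^2 + x_2$, which I rearrange as
\[ p_2 - p_1^2 \ = \ -2 x_1 \cdot p_1 + (x_1^2 + x_2). \]
This motivates the bijection $\psi : \mathbb{F}_p^2 \to \mathbb{F}_p^2$ given by $\psi(u, v) = (u, v - u^2)$, and, for each $\vec{x} \in X$, the line
\[ \ell_{\vec{x}} \ = \ \{ (u, v) \in \mathbb{F}_p^2 : v = -2 x_1 u + (x_1^2 + x_2) \}. \]
Then $\vec{p} \in l_{\vec{x}}$ if and only if $\psi(\vec{p}) \in \ell_{\vec{x}}$. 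Setting $P' = \psi(P)$ we have $|P'| = |P|$, and since $p \geq 100$ is odd, the coefficient $-2x_1$ recovers $x_1$ from the slope of $\ell_{\vec{x}}$, and then $x_1^2 + x_2$ recovers $x_2$ from the intercept. Hence the assignment $\vec{x} \mapsto \ell_{\vec{x}}$ is injective, so the set $L_X = \{\ell_{\vec{x}} : \vec{x} \in X\}$ has $|L_X| = |X|$.

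Thus the incidence count transfers exactly:
\[ \sum_{\vec{p} \in P} \sum_{\vec{x} \in X} \mathds{1}_{\vec{p} \in l_{\vec{x}}} \ = \ \sum_{\vec{p}\,' \in P'} \sum_{\ell \in L_X} \mathds{1}_{\vec{p}\,' \in \ell}. \]
The hypothesis $|X|^{13} \ll p^{15}$ becomes $|L_X|^{13} \ll p^{15}$, so the bound \eqref{sz2} (with $P'$ as the points and $L_X$ as the lines) applies directly and yields
\[ \sum_{\vec{p}\,' \in P'} \sum_{\ell \in L_X} \mathds{1}_{\vec{p}\,' \in \ell} \ \ll \ |P'|^{11/15} |L_X|^{11/15} + |P'| + |L_X| \ = \ |P|^{11/15} |X|^{11/15} + |P| + |X|, \]
which is the desired estimate.

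There is no serious obstacle here: the whole argument is a bookkeeping exercise once one identifies the right shear $\psi$, and the only small points to verify are the bijectivity of $\psi$ and the injectivity of $\vec{x} \mapsto \ell_{\vec{x}}$ (the latter requiring only that $p$ is odd). Everything else is delegated to \eqref{sz2}.
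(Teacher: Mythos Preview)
Your proof is correct and essentially identical to the paper's own argument: both apply the change of variables $(u,v)\mapsto (u,\pm(v-u^2))$ to turn translates of the parabola into non-vertical lines, verify that this is a bijection on points and on curves, and then invoke \eqref{sz2}. The only cosmetic difference is a sign (the paper uses $\phi(x,y)=(x,x^2-y)$ where you use $\psi(u,v)=(u,v-u^2)$), and you are slightly more explicit about why $\vec{x}\mapsto\ell_{\vec{x}}$ is injective.
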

\begin{proof}
Let $\phi : \mathbb{F}_p^2 \to \mathbb{F}_p^2$ satisfy $\phi(x,y) = (x, x^2 - y)$. This implies that $\phi(t+x_1, t^2+ x_2) = (t+x_1, 2x_1 t + x_1^2 - x_2)$ for every $x_1, x_2, t \in \mathbb{F}_p$, and so, we have $\phi(l_{\vec{x}}) = m_{\vec{x}}$, where $m_{\vec{x}}$ is the line defined as
\[ m_{\vec{x}} = \{ (t, 2x_1 t - x_1^2 - x_2) \ | \ t \in \mathbb{F}_p\}. \]
We further note that $\phi(\vec{x}) = \phi(\vec{y})$ if and only if $\vec{x} = \vec{y}$, for every $\vec{x}, \vec{y} \in \mathbb{F}_p^2$. Thus, writing $L_{X}' = \{ m_{\vec{x}} \ | \ \vec{x} \in X\}$, we see that
\[ \sum_{p \in P, l \in L_{\vec{x}}} \mathds{1}_{p \in l} =  \sum_{p' \in \phi(P), m \in L_{\vec{x}}'} \mathds{1}_{p' \in m}. \]
Noting that $|\phi(P)| = |P|$ and $|L_{X}'| = |L_{X}| = |X|$ and applying $\eqref{sz2}$, we obtain the desired conclusion. 
\end{proof}

We can further obtain the following weighted version of the above result by combining Lemmata \ref{sztr} and \ref{wt}.

\begin{lemma} \label{wtst}
Let $P, X$ be non-empty subsets of $\mathbb{F}_p^2$, let $w: P \to \mathbb{N}$ and $w' : X \to \mathbb{N}$ be functions. Moreover, let $|X|^{13} \ll p^{15}$. Then 
\[ \sum_{p \in P, \vec{x} \in X} \mathds{1}_{p \in l_{\vec{x}}} w(p) w'(\vec{x}) \ll \n{w}_{2}^{8/15} \n{w}_1^{7/15} \n{w'}_2^{8/15} \n{w'}_1^{7/15} + \n{w'}_{\infty} \n{w}_1 + \n{w'}_1 \n{w}_{\infty}. \]
\end{lemma}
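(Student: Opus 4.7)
The plan is to reduce the statement to a direct application of Lemma \ref{wt} after first converting the problem into a genuine weighted point--line incidence problem. The map $\phi: \mathbb{F}_p^2 \to \mathbb{F}_p^2$ given by $\phi(x,y) = (x, x^2 - y)$, already used in the proof of Lemma \ref{sztr}, is a bijection on $\mathbb{F}_p^2$ that sends each translated parabola $l_{\vec{x}}$ to the line $m_{\vec{x}} = \{(t, 2x_1 t - x_1^2 - x_2) : t \in \mathbb{F}_p\}$, and $\vec{p} \in l_{\vec{x}}$ if and only if $\phi(\vec{p}) \in m_{\vec{x}}$. Setting $\tilde P = \phi(P)$, $\tilde L = \{m_{\vec{x}} : \vec{x} \in X\}$, and transporting the weights via $\tilde w(\phi(\vec{p})) = w(\vec{p})$ and $\tilde w'(m_{\vec{x}}) = w'(\vec{x})$, the $\ell^1$, $\ell^2$ and $\ell^\infty$ norms of $\tilde w, \tilde w'$ coincide with those of $w, w'$, and the weighted incidence sum on the left of the claimed bound equals the corresponding weighted sum over $\tilde P$ and $\tilde L$.

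Next, I would verify the uniform hypothesis of Lemma \ref{wt}. For any non-empty $P' \subseteq \tilde P$ and any non-empty $L' \subseteq \tilde L$, the corresponding preimages form subsets of $P$ and $X$ respectively, and since $|L'| \leq |X|$ we inherit $|L'|^{13} \leq |X|^{13} \ll p^{15}$. Applying Lemma \ref{sztr} (pulled back through $\phi^{-1}$, which is automatic since $\phi$ is a bijection preserving incidences) yields
\[ \sum_{\vec{q} \in P'} \sum_{m \in L'} \mathds{1}_{\vec{q} \in m} \ll |P'|^{11/15}|L'|^{11/15} + |P'| + |L'|, \]
which is precisely the hypothesis of Lemma \ref{wt} with $a = b = 11/15$ and some absolute constant $C$.

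Finally, I would invoke Lemma \ref{wt} with these exponents. Since $2 - 2a = 2 - 22/15 = 8/15$ and $2a - 1 = 22/15 - 1 = 7/15$, and similarly for $b$, the conclusion reads
\[ \sum_{\vec{q} \in \tilde P, \, m \in \tilde L} \tilde w(\vec{q})\, \tilde w'(m)\, \mathds{1}_{\vec{q} \in m} \ll \n{\tilde w}_2^{8/15}\n{\tilde w}_1^{7/15}\n{\tilde w'}_2^{8/15}\n{\tilde w'}_1^{7/15} + \n{\tilde w}_1 \n{\tilde w'}_\infty + \n{\tilde w}_\infty \n{\tilde w'}_1. \]
Translating back via the norm-preserving correspondence $w \leftrightarrow \tilde w$, $w' \leftrightarrow \tilde w'$ gives exactly the claimed estimate. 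There is no substantive obstacle here: the proof is a syntactic composition of the two preceding lemmata, and the only bookkeeping is to check that $\phi$ preserves incidences and all relevant weight norms, both of which are immediate from its definition as a bijection.
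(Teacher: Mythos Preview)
Your proposal is correct and follows exactly the approach the paper has in mind: the paper's proof is the single sentence that the result follows by combining Lemmata \ref{sztr} and \ref{wt}, and your writeup spells out precisely that combination, including the transport through $\phi$ needed because Lemma \ref{wt} is stated for lines rather than parabolae. The exponent bookkeeping $2-2a=8/15$, $2a-1=7/15$ with $a=b=11/15\in(1/2,1)$ is right, and the norm preservation under the bijection $\phi$ is immediate.
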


We now proceed to proving estimates on $J_{s}(A)$. We commence by recall the notation $\mathscr{A} = \{(a,a^2) : a \in A\}$ and that for any $s \in \mathbb{N}$ and $\vec{n} \in \mathbb{F}_p^2$, we define
\[ r_{s}(\vec{n}) = \{ (\vec{a}_1, \dots, \vec{a}_{s}) \in \mathscr{A}^s \ | \ \vec{n} = \vec{a}_1 + \dots + \vec{a}_{s} \} |. \]
Note that for any $s \geq 2$, we have
\begin{equation} \label{pre1}   
\sup_{\vec{n}} r_s(\vec{n}) \leq |A|^{2s - 2} \sup_{\vec{n}} r_2(\vec{n})  \ll |A|^{2s-2}.  
\end{equation} 
With this in hand, we record the main set of iterative bounds for $J_s(A)$.

\begin{lemma} \label{inc1}
Let $s \geq 3$, let $A$ be a subset of $\mathbb{F}_p$ such that $|A| \ll_{s} p^{\frac{15}{13(s-1)}}$. Then
\[ J_{s}(A) \ll_{s} |A|^{\frac{14s-7}{11}} J_{s-1}(A)^{\frac{4}{11}} + |A|^{2s-3}. \]

\end{lemma}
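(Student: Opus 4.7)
The plan is to bound $J_s(A)$ by a weighted parabola-incidence count via Lemma \ref{wtst}. Expand $J_s(A) = \sum_{\vec{n}} r_s(\vec{n})^2$ by peeling one summand off the second representation $\vec{n} = \vec{b}_1 + \dots + \vec{b}_s$, writing $\vec{x} = \vec{b}_1 + \dots + \vec{b}_{s-1}$ and $\vec{b}_s = \vec{p} - \vec{x}$, to get
\[ J_s(A) = \sum_{\vec{p} \in s\mathscr{A}} \sum_{\vec{x} \in (s-1)\mathscr{A}} r_s(\vec{p}) r_{s-1}(\vec{x}) \mathds{1}_{\vec{p} - \vec{x} \in \mathscr{A}}. \]
The condition $\vec{p} - \vec{x} \in \mathscr{A}$ forces $\vec{p} - \vec{x} = (t, t^2)$ for some $t$, hence $\vec{p} \in l_{\vec{x}}$. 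Dropping the additional constraint $t \in A$ provides the incidence upper bound
\[ J_s(A) \leq \sum_{\vec{p} \in P,\, \vec{x} \in X} w(\vec{p}) w'(\vec{x}) \mathds{1}_{\vec{p} \in l_{\vec{x}}}, \]
with $P = s\mathscr{A}$, $X = (s-1)\mathscr{A}$, $w = r_s$, and $w' = r_{s-1}$.

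Next, apply Lemma \ref{wtst}. The hypothesis $|X|^{13} \ll p^{15}$ is ensured by the bound $|X| \leq |A|^{s-1}$ together with the standing assumption $|A| \ll_s p^{15/(13(s-1))}$; indeed, this is precisely why the exponent is chosen that way. The six norms needed in Lemma \ref{wtst} are $\|w\|_1 = |A|^s$, $\|w\|_2^2 = J_s(A)$, and $\|w\|_\infty \ll |A|^{s-2}$ (using $r_s = r_{s-2} * r_2$ with $\|r_2\|_\infty = O(1)$, cf.\ \eqref{pre1}), together with the analogous formulas for $r_{s-1}$.

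Substituting into Lemma \ref{wtst}, the main term becomes
\[ \|w\|_2^{8/15} \|w\|_1^{7/15} \|w'\|_2^{8/15} \|w'\|_1^{7/15} = J_s(A)^{4/15} J_{s-1}(A)^{4/15} |A|^{(14s-7)/15}, \]
and both lower-order terms $\|w'\|_\infty \|w\|_1$ and $\|w'\|_1 \|w\|_\infty$ simplify to $|A|^{2s-3}$. The resulting inequality reads
\[ J_s(A) \ll J_s(A)^{4/15} J_{s-1}(A)^{4/15} |A|^{(14s-7)/15} + |A|^{2s-3}. \]
Splitting into cases according to which term dominates, and in the first case absorbing $J_s(A)^{4/15}$ into the left-hand side (raising to the $15/11$ power), yields the claimed estimate. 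The argument is mechanical once the incidence configuration is set up; the only delicate point is that the hypothesis on $|A|$ is tight for the Stevens--de Zeeuw range in Lemma \ref{wtst}, and no obstacle beyond bookkeeping the exponents arises.
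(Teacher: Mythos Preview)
Your argument is correct and follows the paper's proof essentially line for line: both set up the identity $J_s(A)=\sum r_s(\vec p)\,r_{s-1}(\vec x)\,\mathds{1}_{\vec p-\vec x\in\mathscr A}$, relax $\vec p-\vec x\in\mathscr A$ to $\vec p\in l_{\vec x}$, invoke Lemma~\ref{wtst} (using $|A|^{13(s-1)}\ll_s p^{15}$), compute the norms $\n{r_j}_1=|A|^j$, $\n{r_j}_2^2=J_j(A)$, $\n{r_j}_\infty\ll|A|^{j-2}$, and rearrange the resulting inequality. There is no substantive difference in method or in the bookkeeping.
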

\begin{proof}
We begin by noting that
\[ J_{s}(A) = \sum_{\vec{a}_1, \dots, \vec{a}_{2s} \in \mathscr{A}} \mathds{1}_{\vec{a}_1 + \dots + \vec{a}_{s} = \vec{a}_{s+1} + \dots + \vec{a}_{2s}} = \sum_{\vec{u} \in s\mathscr{A}} \sum_{\vec{v} \in (s-1)\mathscr{A}} \sum_{\vec{a} \in \mathscr{A}} r_{s}(\vec{u}) r_{s-1}(\vec{v})  \mathds{1}_{\vec{u} = \vec{v} + \vec{a}}. \]
Moreover, since $\vec{u} = \vec{v} + \vec{a}$ implies that $\vec{u} \in l_{\vec{v}}$, we see that
\[ J_{s}(A) \leq \sum_{\vec{u} \in s\mathscr{A}} \sum_{\vec{v} \in (s-1)\mathscr{A}} \sum_{\vec{a} \in \mathscr{A}} r_{s}(\vec{u}) r_{s-1}(\vec{v}) \mathds{1}_{\vec{u} \in l_{\vec{v}}}. \]
Noting the fact that $|(s-1)\mathscr{A}|^{13} \leq |A|^{13(s-1)} \ll_{s} p^{15}$, we may use Lemma $\ref{wtst}$ to bound the right hand side above. This gives us
\[ J_{s}(A) \ll \n{r_s}_2^{8/15} \n{r_s}_1^{7/15} \n{r_{s-1}}_2^{8/15} \n{r_{s-1}}_1^{7/15} + \n{r_{s}}_{\infty} \n{r_{s-1}}_1 + \n{r_{s}}_{1} \n{r_{s-1}}_{\infty}. \]
As in \S2, a standard double-counting argument gives us
\[ \n{r_j}_2^2 = \sum_{\vec{u} \in j \mathscr{A}} r_{j}(\vec{u})^2 = J_{j}(A) \ \text{and} \ \n{r_j}_1 = \sum_{\vec{u} \in j \mathscr{A}} r_{j}(\vec{u}) = |A|^j \]
for each $j \in \mathbb{N}$. Additionally, using $\eqref{pre1}$, we see that
\[ \n{r_j}_{\infty} = \sup_{\vec{n} \in j \mathscr{A}} r_{j}(\vec{n}) \leq |A|^{j-2}. \]
Thus, we deduce that
\[ J_{s}(A) \ll J_{s}(A)^{4/15} J_{s-1}(A)^{4/15} |A|^{\frac{14s-7}{15}} + |A|^{2s-3}, \]
which, upon simplifying, delivers the bound
\[ J_{s}(A) \ll J_{s-1}(A)^{\frac{4}{11}} |A|^{\frac{14s-7}{11}} + |A|^{2s-3}. \qedhere \]
\end{proof}

We may now iterate this with the fact that $J_{3}(A) \ll |A|^{4-1/9}$ to prove Theorem \ref{4c}.


\begin{proof}[Proof of Theorem \ref{4c}]
We prove this by induction on $s$, and so, our base case is when $s=4$. Thus, we have $A \subseteq \mathbb{F}_p$ such that $|A| \ll p^{15/39} \leq p^{1/2}$, whence, we may apply Theorem \ref{3c} to deduce that 
\[ J_3(A) \ll |A|^{4 - 1/9} .\]
Combining the estimates from Lemma \ref{inc1} along with the above upper bound gives us
\[ J_4(A) \ll J_3(A)^{4/11} |A|^{49/11} + |A|^5 = |A|^{6 - 13/99} = |A|^{6 - 1/7 + (4/11)\cdot (2/63)} ,\]
which is the claimed estimate. We now proceed with the inductive step, and so, we suppose that 
\[ J_s(A) \ll_s |A|^{2s -  2 - 1/7 + (4/11)^{s-3} \cdot (2/63)} \]
for any $A \subseteq \mathbb{F}_p$ with $|A| \ll p^{15/13(s-1)}$. Now given a set $A \subseteq \mathbb{F}_p$ with $|A| \ll p^{15/13s}$, we combine the preceding bound with Lemma \ref{inc1} to deduce that
\begin{align*}
  J_s(A) 
  & \ll_s |A|^{(8s-8)/11 - 4/77 + (4/11)^{s-2} \cdot (2/63)} |A|^{(14s+7)/11} + |A|^{2s - 1} \\
  & |A|^{2s - 1/7 + (4/11)^{s-2} \cdot (2/63 )} ,
\end{align*}
which is the required bound. Thus, we conclude the proof of Theorem \ref{4c}.
\end{proof}

We conclude this section by providing the proof of Theorem \ref{smp}.

\begin{proof}[Proof of Theorem \ref{smp}]
We begin by observing that it suffices to consider the case when $A \subseteq \mathbb{F}_p \setminus \{0\}$. In order to see this, suppose that $a_i = 0$ for some $1 \leq i \leq 6$. By considering the multiplicative equation $a_1 a_2 a_3 = a_4 a_5 a_6$, we immediately see that if one of $a_1, a_2, a_3$ is zero, then so is one of $a_4, a_5, a_6$, whence, up to losing a factor of $9$, it suffices to consider the case when $a_1 = a_4 = 0$. Inserting this into the additive equation in \eqref{fv1}, we find that $a_2 + a_3 = a_5 + a_6$, and so, there are at most $O(|A|^3)$ such solutions, which is much less than the desired bound.

Thus assuming $A$ to be a subset of $\mathbb{F}_p \setminus \{0\}$, we consider, for each $(m,n) \in \mathbb{F}_p^2$, the system of equations
\begin{equation} \label{nvm}
m = a_1 + a_2 + a_3 \ \ \text{and} \ \ n = a_1 a_2 a_3 , 
\end{equation} 
with $a_1, a_2, a_3 \in A$. This implies that
\[  (a_1 + a_2) + n(a_1 a_2)^{-1} = m. \]
We denote $s(m,n)$ to be the number of solutions to \eqref{nvm} with $a_1, a_2, a_3 \in A$ and we define the sets
\[ S_1 = \{ (a_1 + a_2, (a_1a_2)^{-1}) : a_1, a_2 \in A\}  \ \ \text{and} \ \  S_2 = \{(a_1 + a_2 + a_3, a_1a_2a_3) : a_1, a_2, a_3 \in A\}. \]
Since there are at most $O(1)$ choices of $(a_1, a_2) \in A^2$ which satisfy $a_1 + a_2$ being some fixed element of $\mathbb{F}_p$ and $(a_1a_2)^{-1}$ being some fixed element of $\mathbb{F}_p \setminus \{0\}$, we see that
\[
    T(A) 
     = \sum_{(m,n) \in S_2} s(m,n)^2 \ll \sum_{(m,n) \in S_2} s(m,n) \sum_{(u,v) \in S_1}  \mathds{1}_{(m,n) \in l_{u,v}  }  ,\]
where for each $u \in \mathbb{F}_p$ and $v \in \mathbb{F}_p\setminus \{0\}$, we define the line 
\[ l_{u,v} = \{ (x,y) \in \mathbb{F}_p^2 : x = y v + u \}.  \]
    Combining Lemma \ref{wt} along with \eqref{sz2} and the fact that $|S_1| \leq |A|^3 \ll p^{15/13}$, we may now deduce that
\[ T(A) \ll \n{s}_2^{8/15} \n{s}_1^{7/15} |S_1|^{11/15} + \n{s}_{\infty} |S_1| + \n{s}_{1}. \]  
Again, applying a double counting argument, we get that
\[ \sum_{(m,n) \in S_2} s(m,n)^2 = T(A) \ \ \text{and} \ \ \sum_{(m,n) \in S_2} s(m,n) = |A|^3 ,  \]
which may then be combined with the preceding discussion and the bounds 
\[ \max_{(m,n) \in S_2} s(m,n) \ll |A| \ \ \text{and} \ \ |S_1| \ll |A|^2 \]
to get that
\[ T(A) \ll T(A)^{4/15} |A|^{7/5}  |A|^{22/15} + |A|^3. \]
Simplifying the above delivers the desired bound
\[  T(A) \ll |A|^{4  - 1/11} . \qedhere \]

\end{proof}

\section{Proofs of Corollaries \ref{cor1} and \ref{cor2}}

We first present the proof of Corollary \ref{cor1}.

\begin{proof}[Proof of Corollary \ref{cor1}]
Given $s \in \mathbb{N}$, we denote 
\[ K_s(A) = \sum_{a_1, \dots, a_{2s} \in A} \mathds{1}_{a_1^2 + \dots - a_{2s}^2 = 0} . \]
Recalling \eqref{ep3} and applying orthogonality and the triangle inequality, we see that 
\begin{align*}
K_{s}(A) 
& = \sum_{n \in sA - sA} \sum_{a_1, \dots, a_{2s} \in A} \mathds{1}_{a_1^2 + \dots - a_{2s}^2 = 0} \mathds{1}_{a_1 + \dots - a_{2s} = n}  \\
& =  \sum_{n \in sA - sA}  p^{-2} \sum_{x,y \in \mathbb{F}_p} |F_{\mathds{1}_A}(x,y)|^{2s} e(-xn/p) \\
& \leq |sA - sA| p^{-2} \sum_{x,y \in \mathbb{F}_p} |F_{\mathds{1}_A}(x,y)|^{2s} \\
& = |sA - sA| J_{s}(A).
\end{align*}
Setting $s=3$ in the above and combining this with Theorem \ref{3c} and Lemma \ref{prin}, we see that
\[ K_3(A) \ll |3A - 3A||A|^{4 - 1/9} \leq K^6 |A|^{5 - 1/9} .\]
Writing $S = \{a^2 : a \in A\}$, we now apply the Cauchy-Schwarz inequality to deduce that
\[ |3S| \geq |S|^6 K_3(A)^{-1} \gg |A|^{1 + 1/9} K^{-6}. \]
This concludes the proof of Corollary \ref{cor1}.
\end{proof}

We now focus on the setting of Corollary \ref{cor2} and we proceed by presenting the proof of the upper bound in \eqref{vach}.
Thus, given function $\mathfrak{a} : \mathbb{F}_p \to \mathbb{R}$ supported on some set $A \subseteq \mathbb{F}_p$ such that $|A| \ll p^{1/2}$, we may apply the Cauchy-Schwarz inequality to deduce that for every $x, y \in \mathbb{F}_p$, we have
\[ |F_{\mathfrak{a}}(x,y)|^2 = |\sum_{n \in \mathbb{F}_p} \mathfrak{a}(n) e( (x n + y n^2)/p)|^2 \leq |A|  \n{\mathfrak{a}}_{2}^2 . \]
This implies that
\begin{align*}
    \n{F_{\mathfrak{a}}}_{L^6}^6  \leq \max_{x,y \in \mathbb{F}_p} |F_{\mathfrak{a}}(x,y)|^2 \n{F_{\mathfrak{a}}}_{L^4}^4 \leq |A|  \n{\mathfrak{a}}_{2}^2 \n{F_{\mathfrak{a}}}_{L^4}^4 .
\end{align*}  
Applying orthogonality, 
 we see that
 \[ \n{F_{\mathfrak{a}}}_{L^4}^4 = \sum_{a_1, \dots, a_4 \in A} \mathfrak{a}(a_1) \dots \mathfrak{a}(a_4)  \mathds{1}_{a_1 + a_2 - a_3 - a_4 = a_1^2 + a_2^2 - a_3^2 - a_4^2  = 0} \ll \big(\sum_{a \in A} |\mathfrak{a}(a)|^2 \big)^2 =  \n{\mathfrak{a}}_{2}^4, \]
which, in turn, combines with the preceding inequality to deliver the bound stated in \eqref{vach}.

We now present the proof of Corollary \ref{cor2}, and here, we closely follow the proof of \cite[Theorem 14]{CG2007}.

\begin{proof}[Proof of Corollary \ref{cor2}]
Observe that Theorem \ref{3c} implies that for any $A' \subseteq A$, we have
\begin{equation} \label{js1}
J_{3}(A')  \leq X |A'|^3, 
\end{equation}
where $X \ll |A|^{8/9}.$ Next, we note that the conclusion of Corollary \ref{cor2} is invariant under replacing the function $\mathfrak{a}$ by $\lambda \mathfrak{a}$, for any $\lambda >0$. In particular, this means that upon replacing $\mathfrak{a}$ by $\n{\mathfrak{a}}_{2}^{-1}\mathfrak{a}$, we may assume that $\n{\mathfrak{a}}_{2} = 1$ and that $|\mathfrak{a}(n)| \leq 1$ for all $n \in \mathbb{F}_p$. We now set 
\[ A_0 = \{ a \in A : |\mathfrak{a}(a)| \leq 1/|A|\}, \ \ \text{and} \ \ A_j = \{ a \in A : 2^{j-1}/|A| < |\mathfrak{a}(a)| \leq 2^{j}/|A| \}  \]
for every $1 \leq j \leq J$, for some positive integer $J \leq 2\ceil{\log |A|}$. Writing, for every $0 \leq j \leq J$, the function $\mathfrak{a}_j : \mathbb{F}_p \to \mathbb{C}$ as $\mathfrak{a}_j(x) = \mathfrak{a}(x)$ when $x \in A_j$ and $\mathfrak{a}_j(x) = 0$ for $x \in \mathbb{F}_p \setminus A_j$, we see that 
\begin{equation} \label{popc}
\mathfrak{a} = \sum_{j=0}^J \mathfrak{a}_j \ \ \text{and} \ \ F_{\mathfrak{a}}  = \sum_{j=0}^J F_{\mathfrak{a}_j}  \ \ \text{and} \ \ 1 = \n{\mathfrak{a}}_{2}^2 \gg \sum_{j=1}^J|A_j| 2^{2j} |A|^{-2}. 
\end{equation} 
Note that for every $1 \leq j \leq J$, we may apply orthogonality and \eqref{js1} to deduce that
\begin{align*}
\n{ F_{\mathfrak{a}_j }}_{L^6}^6
& = \sum_{a_1, \dots, a_6 \in A} \mathfrak{a}(a_1) \dots \mathfrak{a}(a_6) \mathds{1}_{a_1 + \dots - a_6 = a_1^2 + \dots - a_6^2 = 0}  \\
& \ll 2^{6j}|A|^{-6} J_{3}(A_j) \ll X 2^{6j} |A|^{-6} |A_j|^3 . 
\end{align*}
Moreover, we have the trivial bound $|F_{a_0}(x,y)| \leq |A|^{-1} |A_0| \leq 1$, whence, $\n{F_{\mathfrak{a}_0}}_{L^6} \leq 1.$ Amalgamating this with the preceding estimate, inequality \eqref{popc}, the triangle inequality and the Cauchy-Schwarz inequality, we see that
\begin{align*}
\n{F_{\mathfrak{a}}}_{L^6}
& \leq \sum_{j=0}^{J} \n{F_{\mathfrak{a}_j}}_{L^6} \leq 1 +  X^{1/6} \sum_{j=1}^{J} 2^j|A|^{-1} |A_j|^{1/2} \\
& \leq  1 + X^{1/6} J^{1/2} \big(\sum_{j=1}^J 2^{2j}|A|^{-2} |A_j|\big)^{1/2}  \\ 
& \ll X^{1/6}(\log|A| + 1)^{1/2} \ll |A|^{4/27} ( \log|A| + 1)^{1/2},
\end{align*} 
which is the desired bound. 
\end{proof}


\bibliographystyle{amsbracket}
\providecommand{\bysame}{\leavevmode\hbox to3em{\hrulefill}\thinspace}

\end{document}